\newcommand{\cC}{C}
\newcommand{\cD}{\mathcal{D}}
\newcommand{\cG}{\mathcal{G}}
\newcommand{\cL}{L}
\newcommand{\cW}{W}
\newcommand{\cF}{\mathcal{F}}
\newcommand{\cZD}{\mathcal{ZD}}
\newcommand{\fB}{\mathfrak{B}}
\newtheorem{proposition}{Proposition}[section]
\newtheorem{theorem}[proposition]{Theorem}
\theoremstyle{definition}
\newtheorem{definition}[proposition]{Definition}
\newtheorem{remark}[proposition]{Remark}
\newtheorem{assumption}[proposition]{Assumption}
\newcommand{\N}{{\mathbb{N}}}
\newcommand{\Z}{{\mathbb{Z}}}
\newcommand{\R}{{\mathbb{R}}}
\newcommand{\C}{{\mathbb{C}}}
\newcommand{\T}{{\mathbb{T}}}
\newcommand{\Gl}{\mathbf{Gl}}
\DeclareMathOperator{\RE}{Re}
\DeclareMathOperator{\im}{im}
\DeclareMathOperator{\rk}{rank}
\DeclareMathOperator{\diag}{diag}
\newcommand{\ddt}{\frac{\text{\normalfont d}}{\text{\normalfont d}t}}
\newcommand{\ddts}{\tfrac{\text{\normalfont d}}{\text{\normalfont d}t}}
\newcommand{\ds}[1]{{\rm \, d} #1 \,}
\newcommand{\setdef}[2]{\left\{\ #1\ \left|\ \vphantom{#1} #2\ \right.\right\}}
\DeclareMathOperator{\loc}{loc}
\DeclareMathOperator{\rmref}{ref}
\newlength{\innersep}
\newlength{\maxlength}
\newlength{\dummylength}
\newcommand{\JordanBlock}[3]{
\setlength{\arraycolsep}{0pt}
\renewcommand{\arraystretch}{0}
\settowidth{\maxlength}{$#1$}
\settoheight{\dummylength}{$#1$}
\ifdim\dummylength>\maxlength
  \setlength{\maxlength}{\dummylength}
\fi
\settowidth{\dummylength}{$#2$}
\ifdim\dummylength>\maxlength
  \setlength{\maxlength}{\dummylength}
\fi
\settoheight{\dummylength}{$#2$}
\ifdim\dummylength>\maxlength
  \setlength{\maxlength}{\dummylength}
\fi
\setlength{\innersep}{0.1\maxlength}
\addtolength{\maxlength}{\innersep}
\addtolength{\maxlength}{\innersep}
\newcommand{\invisiblebox}{\phantom{\rule{\maxlength}{\maxlength}}}
\begin{array}{ccc}
  {\tikz[remember picture] \node[outer sep=0,inner sep=\innersep] (a11) {$#1$};} & \invisiblebox & {\tikz[remember picture] \node[outer sep=0,inner sep=\innersep] (a13) {$#1$};}\\
   \invisiblebox &\phantom{\rule{#3}{#3}} & \invisiblebox \\
   {\tikz[remember picture] \node[outer sep=0,inner sep=\innersep] (a31) {$#2$};} & \invisiblebox & {\tikz[remember picture] \node[outer sep=0,inner sep=\innersep] (a33) {$#1$};}
\end{array}
\tikz[remember picture, overlay] \draw (a11) edge[very thick] (a33);
}
\newcommand{\UnityBlock}[3]{
\setlength{\arraycolsep}{0pt}
\renewcommand{\arraystretch}{0}
\settowidth{\maxlength}{$#1$}
\settoheight{\dummylength}{$#1$}
\ifdim\dummylength>\maxlength
  \setlength{\maxlength}{\dummylength}
\fi
\settowidth{\dummylength}{$#2$}
\ifdim\dummylength>\maxlength
  \setlength{\maxlength}{\dummylength}
\fi
\settoheight{\dummylength}{$#2$}
\ifdim\dummylength>\maxlength
  \setlength{\maxlength}{\dummylength}
\fi
\setlength{\innersep}{0.1\maxlength}
\addtolength{\maxlength}{\innersep}
\addtolength{\maxlength}{\innersep}
\newcommand{\invisiblebox}{\phantom{\rule{\maxlength}{\maxlength}}}
\begin{array}{ccc}
  {\tikz[remember picture] \node[outer sep=0,inner sep=\innersep] (a11) {$#1$};} & \invisiblebox & \invisiblebox\\
   \invisiblebox &\phantom{\rule{#3}{#3}} & \invisiblebox \\
   \invisiblebox & \invisiblebox & {\tikz[remember picture] \node[outer sep=0,inner sep=\innersep] (a33) {$#1$};}
\end{array}
\tikz[remember picture, overlay] \draw (a11) edge[very thick] (a33);
}
\newcommand{\ReverseUnityBlock}[3]{
\setlength{\arraycolsep}{0pt}
\renewcommand{\arraystretch}{0}
\settowidth{\maxlength}{$#1$}
\settoheight{\dummylength}{$#1$}
\ifdim\dummylength>\maxlength
  \setlength{\maxlength}{\dummylength}
\fi
\settowidth{\dummylength}{$#2$}
\ifdim\dummylength>\maxlength
  \setlength{\maxlength}{\dummylength}
\fi
\settoheight{\dummylength}{$#2$}
\ifdim\dummylength>\maxlength
  \setlength{\maxlength}{\dummylength}
\fi
\setlength{\innersep}{0.1\maxlength}
\addtolength{\maxlength}{\innersep}
\addtolength{\maxlength}{\innersep}
\newcommand{\invisiblebox}{\phantom{\rule{\maxlength}{\maxlength}}}
\begin{array}{ccc}
   \invisiblebox & \invisiblebox & {\tikz[remember picture] \node[outer sep=0,inner sep=\innersep] (a13) {$#1$};}\\
   \invisiblebox &\phantom{\rule{#3}{#3}} & \invisiblebox \\
   {\tikz[remember picture] \node[outer sep=0,inner sep=\innersep] (a31) {$#1$};} &\invisiblebox & \invisiblebox
\end{array}
\tikz[remember picture, overlay] \draw (a13) edge[very thick] (a31);
}
\newcommand{\LowerNilBlock}[3]{
\setlength{\arraycolsep}{0pt}
\renewcommand{\arraystretch}{0}
\settowidth{\maxlength}{$#1$}
\settoheight{\dummylength}{$#1$}
\ifdim\dummylength>\maxlength
  \setlength{\maxlength}{\dummylength}
\fi
\settowidth{\dummylength}{$#2$}
\ifdim\dummylength>\maxlength
  \setlength{\maxlength}{\dummylength}
\fi
\settoheight{\dummylength}{$#2$}
\ifdim\dummylength>\maxlength
  \setlength{\maxlength}{\dummylength}
\fi
\setlength{\innersep}{0.1\maxlength}
\addtolength{\maxlength}{\innersep}
\addtolength{\maxlength}{\innersep}
\newcommand{\invisiblebox}{\phantom{\rule{\maxlength}{\maxlength}}}
\begin{array}{cccc}
  \tikz[remember picture] \node[outer sep=0,inner sep=\innersep] (a11) {$#1$}; &  & \invisiblebox & \invisiblebox\\
  {\tikz[remember picture] \node[outer sep=0,inner sep=\innersep] (a21) {$#2$};}&  & \invisiblebox & \invisiblebox\\
   & \phantom{\rule{#3}{#3}} &  & \\
   \invisiblebox & &  {\tikz[remember picture] \node[outer sep=0,inner sep=\innersep] (a43) {$#2$};} &
   {\tikz[remember picture] \node[outer sep=0,inner sep=\innersep] (a44) {$#1$};}
\end{array}
\tikz[remember picture, overlay] \draw (a11) edge[very thick] (a44);
\tikz[remember picture, overlay] \draw (a21) edge[very thick] (a43);
}
\newcommand{\SmallNilBlock}[3]{
\LowerNilBlock{\mbox{\scriptsize $#1$}}{\mbox{\scriptsize $#2$}}{#3}
}
\newcommand{\RectBlock}[3]{
\setlength{\arraycolsep}{0pt}
\renewcommand{\arraystretch}{0}
\settowidth{\maxlength}{$#1$}
\settoheight{\dummylength}{$#1$}
\ifdim\dummylength>\maxlength
  \setlength{\maxlength}{\dummylength}
\fi
\settowidth{\dummylength}{$#2$}
\ifdim\dummylength>\maxlength
  \setlength{\maxlength}{\dummylength}
\fi
\settoheight{\dummylength}{$#2$}
\ifdim\dummylength>\maxlength
  \setlength{\maxlength}{\dummylength}
\fi
\setlength{\innersep}{0.1\maxlength}
\addtolength{\maxlength}{\innersep}
\addtolength{\maxlength}{\innersep}
\newcommand{\invisiblebox}{\phantom{\rule{\maxlength}{\maxlength}}}
\begin{array}{ccccc}
  \tikz[remember picture] \node[outer sep=0,inner sep=\innersep] (a11) {$#1$}; &{\tikz[remember picture] \node[outer sep=0,inner sep=\innersep] (a21) {$#2$};}& & \invisiblebox & \invisiblebox\\
  &&\phantom{\rule{#3}{#3}} &&\\
   \invisiblebox &\invisiblebox& &
   {\tikz[remember picture] \node[outer sep=0,inner sep=\innersep] (a44) {$#1$};} & {\tikz[remember picture] \node[outer sep=0,inner sep=\innersep] (a43) {$#2$};}
\end{array}
\tikz[remember picture, overlay] \draw (a11) edge[very thick] (a44);
\tikz[remember picture, overlay] \draw (a21) edge[very thick] (a43);
}
\newenvironment{smallbmatrix}
{\left[\begin{smallmatrix}}
{\end{smallmatrix}\right]}
\renewcommand*\env@matrix[1][*\c@MaxMatrixCols c]{%
  \hskip -\arraycolsep
  \let\@ifnextchar\new@ifnextchar
  \array{#1}}
 \journalname{DAE Forum}
\begin{document}

\title{Vector relative degree and funnel control for differential-algebraic systems\thanks{This work was supported by the German Research Foundation (Deutsche Forschungsgemeinschaft) via the grant BE 6263/1-1.}}


\titlerunning{Vector relative degree and funnel control}        

\author{Thomas Berger \and Huy Ho\`{a}ng L\^{e} \and Timo Reis}


\institute{Corresponding author: Thomas Berger  \at Tel.: +49 5251 60-3779
            \and
            Thomas Berger \at
              Institut f\"ur Mathematik, Universit\"at Paderborn, Warburger Str.~100, 33098~Paderborn, Germany \\
              \email{thomas.berger@math.upb.de}
           \and
           Huy Ho\`{a}ng L\^{e} \at
             Department of Mathematics, National University of Civil Engineering, 55 Giai Phong road, Dong Tam, Hai Ba Trung,
              Hanoi, Vietnam\\
              \email{hoanglh@nuce.edu.vn}           
           \and
           Timo Reis \at
           Fachbereich Mathematik, Universit\"at Hamburg, Bundesstr. 55,  20146 Hamburg, Germany\\
              \email{timo.reis@uni-hamburg.de}           
           }

\date{Received: date / Accepted: date}

\maketitle

\begin{abstract}
We consider tracking control for multi-input multi-output differen\-tial-al\-ge\-braic systems. First, the concept of vector relative degree is generalized for linear systems and we arrive at the novel concept of ``truncated vector relative degree'', and we derive a~new normal form. Thereafter, we consider a~class of nonlinear functional differen\-tial-algebraic systems which comprises linear systems with truncated vector relative degree. For this class we introduce a~feedback controller which achieves that, for a~given sufficiently smooth reference signal, the tracking error evolves within a prespecified performance funnel. We illustrate our results by an example of a robotic manipulator.

\keywords{adaptive control \and differential-algebraic equations \and funnel control \and relative degree
}
 \subclass{
        34A09  
\and   	93C05  
\and	93C10  
\and	93C23  
\and 	93C40  
}
\end{abstract}

\section{Introduction}

Funnel control has been introduced in \cite{IlchRyan02b} almost two decades ago. Meanwhile, plenty of articles have been published in which funnel control from both a theoretical and an applied perspective are considered, see e.g.~\cite{BergLe18a,IlchRyan08,HackHopf13,LibeTren13b,Berg16b,Hack17,BergOtto19,SenfPaug14,BergReis14a,BergRaue18} to mention only a~few.

A typical assumption in funnel control is that the system has a~{\em strict relative degree}, which means that the input-output behavior can be described by a differential equation which has the same order for all outputs. However, multi-input, multi-output systems that appear in real-world applications do not always have a~strict relative degree. Instead, the input-output behavior is described by a collection of differential equations of different order for each output, which is referred to as {\em vector relative degree}.

The subject of this article twofold: First we consider linear (not necessarily regular) systems described by differential-algebraic equations (DAEs). We generalize the notion of vector relative degree as given in~\cite[Def.~5.3.4]{Berg14a} for regular DAEs, see~\cite{Muel09a,Isid95} for systems of ordinary differential equations (ODEs). Furthermore, we develop a~normal form for linear DAE systems which allows to read off this new \emph{truncated} vector relative degree as well as the zero dynamics. Thereafter, we consider a~class of nonlinear functional DAE systems which encompasses linear systems in this normal form, and we introduce a~new funnel controller for this system class.

Our results generalize, on the one hand, the results
of~\cite{BergLe18a}, where systems with strict relative degree are considered. On the other hand, concerning funnel control, the results in this article generalize those of~\cite{Berg16b,BergIlch14} for linear and nonlinear DAEs, where the truncated vector relative degree (although this notion does not appear in these articles) is restricted to be component-wise less or equal to one. Note that~\cite{Berg16b} already encompasses the results found in~\cite{BergIlch12a} for linear DAE systems with properly invertible transfer function. DAEs with higher relative degree have been considered in~\cite{BergIlch12b}, and even this article is comprised by the present results. Therefore, the present article can be seen as a unification of the funnel control results presented in the previous works~\cite{Berg16b,BergIlch12a,BergIlch12b,BergIlch14,BergLe18a} to a fairly general class of nonlinear DAE systems.


\subsection{Nomenclature}\label{Ssec:Nomencl}
Thoughout this article, $\R_{\ge 0} = [0,\infty)$ and $\|x\|$ is  the Euclidean norm of $x\in\R^n$. The symbols $\N$ denotes the set of natural numbers and $\N_0 = \N \cup\{0\}$. 
 The ring of real polynomials is denoted by $\R[s]$, and $\R(s)$ is its quotient field. In other words, $\R(s)$ is the field of real rational functions. Further, $ \Gl_n(\R)$
stands for the group of invertible matrices in $\R^{n\times n}$.

The  restriction of a function $f:V\to\R^n$ to $W\subseteq V$ is denoted by $\left.f\right|_{W}$, $V\subseteq W$. For $p\in[1,\infty]$,
$\cL^p(I\to\R^n)$ ($\cL^p_{\loc}(I\to\R^n)$) stands for the space of measurable and (locally) $p$-th power integrable functions $f:I\to\R^n$, $I\subseteq\R$ an interval. Likewise $\cL^\infty(I\to\R^n)$ ($\cL^\infty_{\loc}(I\to\R^n)$) is the space of measurable and (locally) essentially bounded functions $f:I\to\R^n$, and $\|f\|_\infty$ stands for the essential supremum of $f$. Note that functions which agree almost everywhere are identified. Further, for $p\in[1,\infty]$ and $k\in\N_0$,
$\cW^{k,p}(I\to\R^n)$ is the Sobolev space of elements of $L^{p}(I\to\R^n)$ ($L^{p}_{\loc}(I\to\R^n)$) with the property that the first $k$ weak derivatives exist and are elements of $L^{p}(I\to\R^n)$ ($L^{p}_{\loc}(I\to\R^n)$).
Moreover, $\cC^k(V\to\R^n)$ is the set of $k$-times continuously differentiable functions $f:V\to \R^n$, $V\subseteq\R^m$, and we set $\cC(V\to\R^n) := \cC^0(V\to\R^n)$.

\section{Linear systems and the truncated vector relative degree}\label{sec:genvrd}

In this section, we consider linear constant coefficient DAE systems
\begin{equation}\label{syst:DAE.line}\index{system!linear}
\begin{aligned}
\ddts E x(t) &= Ax(t)+Bu(t),\\
y(t)&=Cx(t),
\end{aligned}
\end{equation}
where $E,A\in \R^{l\times n}$, $B\in \R^{l\times m}$, $C\in \R^{p\times n}$. We denote the class of these systems by $\Sigma_{l,n,m,p}$ and write $[E,A,B,C]\in \Sigma_{l,n,m,p}$. We stress that these systems are not required to be {\it regular}, which would mean that $l=n$ and $\det(sE-A)\in\R[s]\setminus\{0\}$. The functions $u:\R \rightarrow \R^{m}$, $x:\R \rightarrow \R^n$, and $y:\R \rightarrow \R^p$ are called {\it input, (generalized) state variable}, and {\it output} of the system, respectively. We introduce the {\it behavior} of system~\eqref{syst:DAE.line} as\index{input}\index{output} \index{system!differential-algebraic}\index{system!regular}
\[\begin{aligned}
\fB_{[E,A,B,C]}:=&\left\{\phantom{\int}\!\!\!\!\!(x,u,y)\in\cL^1_{\loc}(\R\to\R^n\times \R^m\times \R^p)\ \right|\\
&\quad\left.\phantom{\int}\!\!\!\!\!\!\!\!\!\! Ex\in \cW^{1,1}_{\loc}(\R\to\R^l)\,\wedge\,\ddts E x= Ax+Bu\,\wedge\, y=Cx+Du\
\right\}.
\end{aligned}\]
For a~regular system $[E,A,B,C]\in \Sigma_{n,n,m,p}$, the {\em transfer function}\index{transfer function} is defined by
$$
G(s)=C(sE-A)^{-1}B\in \R(s)^{p\times m}.
$$

\subsection{Zero dynamics and right-invertibility}

To specify the class that we consider, we introduce the {\it zero dynamics} which are the set of solutions resulting in a trivial output. For more details on the concept of zero dynamics and a literature survey we refer to~\cite{Berg14a}.

\begin{definition}\leavevmode
\label{Def.zero.dyna}
The zero dynamics of $[E,A,B,C]\in \Sigma_{l,n,m,p}$ are the set\index{zero dynamics}
$$
\cZD_{[E,A,B,C]}:=\setdef{(x,u,y)\in \fB_{[E,A,B,C]}}{y= 0}.
$$
We call $\cZD_{[E,A,B,C]}$ {\em autonomous}, if\index{zero dynamics!autonomous}
$$
\forall\, \omega \in\cZD_{[E,A,B,C]}\ \forall\, I\subseteq \R \text{ open interval:}\ \ \omega|_I = 0\ \Rightarrow\ \omega= 0,
$$
and {\em asymptotically stable}, if\index{zero dynamics!asymptotically stable}
$$
\forall\, (x,u,y)\in \cZD_{[E,A,B,C]}: \lim\limits_{t\to \infty}\left\|\left.(x,u)\right|_{[t,\infty)}\right\|_\infty=0.
$$
\end{definition}

\begin{remark}\label{rem:zd}\leavevmode
Let $[E,A,B,C]\in \Sigma_{l,n,m,p}$.
\begin{enumerate}[a)]
\item\label{rem:zda} It has been shown in \cite[Prop.~3.5]{Berg16b} that
\[\text{$\cZD_{[E,A,B,C]}$ are autonomous}\ \
\Longleftrightarrow\ \ \ker_{\R(s)}\left[\begin{smallmatrix}-sE+A&B\\C&0\end{smallmatrix}\right]=\{0\}.
\]
In particular, $\left[\begin{smallmatrix}-sE+A&B\\C&0\end{smallmatrix}\right]$ is left invertible over $\R(s)$ if, and only if, $\cZD_{[E,A,B,C]}$ are autonomous. If $[E,A,B,C$ is regular, its transfer function $G(s)$ satisfies
\begin{equation}
\begin{bmatrix}-sE+A&B\\C&0\end{bmatrix}\begin{bmatrix}I_n&(sE-A)^{-1}B\\0&I_m\end{bmatrix}= \begin{bmatrix}-sE+A&0\\C&G(s)\end{bmatrix},\label{eq:tfschur}
\end{equation}
hence autonomy of the zero dynamics is equivalent to $G(s)$ having full column rank over $\R(s)$, cf.~\cite[Prop.~4.8]{Berg16b}.

\item\label{rem:zdb}
It has been shown in \cite[Lem.~3.11]{Berg16b} that
\[
\begin{aligned}
&\;\text{$\cZD_{[E,A,B,C]}$ are asymptotically stable}\\
\Longleftrightarrow\ &\; \ker_{\C}\left[\begin{smallmatrix}-\lambda E+A&B\\C&0\end{smallmatrix}\right]=\{0\}\;\text{ for all $\lambda\in\C_+$ with $\RE(\lambda)\geq0$}.
\end{aligned}\]
\end{enumerate}\end{remark}
We will consider systems with autonomous zero dynamics throughout this article. We will furthermore assume that the system is {\em right-invertible}, which is defined in the following.

\begin{definition}\label{Def.invertible.sys}\index{system!right-invertible}\index{right-invertible}
The system $[E,A,B,C]\in \Sigma_{l,n,m,p}$ is called
{\it right-invertible}, if
$$
\forall\, y\in \cC^{\infty}(\R\to\R^p)\ \exists\, (x,u)\in \cL^1_{\loc}(\R\to\R^n\times\R^m):\ (x,u,y)\in \fB_{[E,A,B,C]}.
$$
\end{definition}
The notion of right-invertibility has been used in \cite[Sec.~8.2]{TrenStoo01} for systems governed by ordinary differential equations and in \cite{Berg14c,Berg16b} for the differential-algebraic case. The concept is indeed motivated by tracking control: Namely, right-invertibility means that any smooth signal can be tracked by the output on a~right-invertible system.

\begin{remark}\label{rem:rightinv}
Consider a regular system  $[E,A,B,C]\in\Sigma_{n,n,m,p}$ with transfer function $G(s)$. It has been shown in \cite[Prop.~4.8]{Berg16b} that
\[
\text{$[E,A,B,C]$ is right-invertible}\ \
\Longleftrightarrow\ \ \im_{\R(s)}G(s)=\R(s)^{p},
\]
whence, by \eqref{eq:tfschur},
\[
\text{$[E,A,B,C]$ is right-invertible}\ \
\Longleftrightarrow\ \ \im_{\R(s)}\left[\begin{smallmatrix}-sE+A&B\\C&0\end{smallmatrix}\right]=\R(s)^{n+p},
\]
Combining this with Remark~\ref{rem:zd}~\ref{rem:zda}, we can infer from the dimension formula that for regular square systems\index{system!square} $[E,A,B,C]\in \Sigma_{n,n,m,m}$ (i.e., the dimensions of input and output coincide) with transfer function $G(s)\in\R(s)^{m\times m}$, the following statements are equivalent:
\begin{enumerate}[(i)]
\item  $\cZD_{[E,A,B,C]}$ autonomous,
\item $[E,A,B,C]$ is right-invertible,
\item $G(s)\in\R(s)^{m\times m}$ is invertible over $\R(s)$,
\item $\left[\begin{smallmatrix}-s E+A&B\\C&0\end{smallmatrix}\right]$ is invertible over $\R(s)$.
\end{enumerate}
\end{remark}

For general right-invertible systems with autonomous zero dynamics, we can derive a certain normal form under state space transformation. The following result is a straightforward combination of~\cite[Lem.~4.2~\&~Thm.~4.3~\&~Prop.~4.6]{Berg16b}.

\begin{theorem}\label{Thm:DAE.syst.inve}
Let a~right-invertible system $[E,A,B,C]\in \Sigma_{l,n,m,p}$ with autonomous zero dynamics be given. Then there exist $W\in \Gl_l(\R)$, $T\in \Gl_n(\R)$ such that
\begin{equation}\label{DAE.Thomas.form}
\begin{aligned}
W(sE-A)T=&\begin{bmatrix}
sI_{n_1}-Q&-A_{12}&0\\
-A_{21}&sE_{22}-A_{22}&sE_{23}\\
0&sE_{32}&sN-I_{n_3}\\0&0&-sE_{43}
\end{bmatrix},\;\; WB=\begin{bmatrix}
0\\
I_m\\
0\\0
\end{bmatrix},\\ CT=&\begin{bmatrix}0&I_p&0\end{bmatrix},
\end{aligned}
\end{equation}
where $n_1,n_3,n_4\in\N_0$, $N\in \R^{n_3\times n_3}$ is nilpotent \index{matrix!nilpotent} and
\[\begin{aligned}
Q\in\R^{n_1\times n_1},\quad A_{12}\in& \R^{n_1 \times p},& A_{21}\in& \R^{m \times n_1},\\
E_{22}, A_{22}\in& \R^{m \times p},& E_{23}\in&\R^{m\times n_3},\\
E_{32}\in &\R^{n_3 \times p},& E_{43}\in& \R^{n_4 \times n_3}
\end{aligned}\]
are such that $E_{43}N^jE_{32}=0$ for all $j\in \N_0$.
\end{theorem}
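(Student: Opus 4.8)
The plan is to assemble the form in \eqref{DAE.Thomas.form} from the three cited results of \cite{Berg16b}, so that the task reduces to verifying their hypotheses and aligning their conclusions into a single pencil. As a preliminary I would record two rank facts. Since the system is right-invertible, $C$ must have full row rank $p$: a nonzero left annihilator of $C$ would force a fixed linear combination of the outputs to vanish along every trajectory, contradicting the attainability of an arbitrary $y\in\cC^\infty(\R\to\R^p)$. Dually, autonomy of the zero dynamics forces $B$ to have full column rank $m$, since any $v\in\ker B$ yields $\left[\begin{smallmatrix}0\\v\end{smallmatrix}\right]\in\ker_{\R(s)}\left[\begin{smallmatrix}-sE+A&B\\C&0\end{smallmatrix}\right]$, which is trivial by Remark~\ref{rem:zd}~\ref{rem:zda}. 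These facts are what allow the normalisations of $B$ and $C$ to the unit-block shapes in \eqref{DAE.Thomas.form}.

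Next I would invoke the normal-form part of \cite[Lem.~4.2~\&~Thm.~4.3]{Berg16b} to obtain the block structure of $W(sE-A)T$. The construction combines a Kronecker-type reduction of $sE-A$ with the normalisation of $B$ and $C$: the $p$ columns carrying $CT=[0\ I_p\ 0]$ and the $m$ rows carrying the unit block of $WB$ are split off, and the remainder decomposes into a finite part (the block $sI_{n_1}-Q$), a nilpotent index part $sN-I_{n_3}$ associated with the eigenvalue at infinity, and an over-determined singular part contributing the extra $n_4$ rows with $-sE_{43}$. Autonomy of the zero dynamics enters through the left-invertibility of $\left[\begin{smallmatrix}-sE+A&B\\C&0\end{smallmatrix}\right]$ from Remark~\ref{rem:zd}~\ref{rem:zda}, which rules out under-determined Kronecker blocks; right-invertibility is what places the output coordinates in the indicated position and produces the triangular coupling, so that the $(1,3)$ block is $0$ and the coupling of the algebraic part to the input equations appears only as $sE_{23}$.

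The one genuinely structural assertion, supplied by \cite[Prop.~4.6]{Berg16b}, is the identity $E_{43}N^jE_{32}=0$ for all $j\in\N_0$. I would derive it as a consistency condition forced by right-invertibility: solving the algebraic row block $(sN-I_{n_3})x_3=-sE_{32}x_2+\dots$ for $x_3$ uses the finite Neumann expansion $(sN-I_{n_3})^{-1}=-\sum_{j=0}^{n_3-1}s^jN^j$ and expresses $x_3$ through the output coordinate $x_2$; substituting into the over-determined rows $-sE_{43}x_3$ produces exactly the terms $E_{43}N^jE_{32}$. Because an arbitrary smooth output must remain attainable, these terms cannot constrain $x_2$, which forces each $E_{43}N^jE_{32}$ to vanish. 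This is the step I expect to be the main obstacle: unlike the block decomposition it is not a formal consequence of a canonical form but reflects the interaction of the singular block with the whole nilpotent chain, and making it precise requires tracking the Neumann series through $E_{43}$ as in the cited proposition. Collecting the dimensions $n_1,n_3,n_4$, the nilpotency of $N$, and the unit blocks then completes the assembly.
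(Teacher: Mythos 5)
Your proposal takes essentially the same route as the paper: the paper offers no separate proof but declares the theorem a ``straightforward combination'' of \cite[Lem.~4.2, Thm.~4.3 \& Prop.~4.6]{Berg16b}, which are exactly the three results you assemble and whose hypotheses you verify. Your supplementary sketches --- full column rank of $B$ from autonomy via Remark~\ref{rem:zd}, full row rank of $C$ from right-invertibility, and the finite Neumann expansion of $(sN-I_{n_3})^{-1}$ forcing $E_{43}N^jE_{32}=0$ --- are all correct and consistent with the cited material.
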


\begin{remark}\label{Rem:DAE.syst.inve}\leavevmode
Let $[E,A,B,C]\in \Sigma_{l,n,m,p}$ be right-invertible and have autonomous zero dynamics. Using the form~\eqref{DAE.Thomas.form}, we see that
$(x,u,y)\in \fB_{[E,A,B,C]}$ 
if, and only if,
\[Tx=(\eta^{\top},y^{\top},x_3^{\top})^{\top}\in \cL^{1}_{\loc}(\R\to\R^{n_1+p+n_3})\]
 satisfies
\begin{equation}
\begin{aligned}
\begin{bmatrix} E_{22}\\ E_{32}\end{bmatrix} y&\in\cW^{1,1}_{\loc}(\R\to\R^{m+n_3}), &\begin{bmatrix} E_{23}\\ N\\ E_{43}\end{bmatrix} x_3&\in \cW^{1,1}_{\loc}(\R\to\R^{m+n_3+n_4})
\end{aligned}\label{eq:smoothness}\end{equation}
and the equations
\begin{subequations}\label{eq:normalform}
\begin{align}
\dot{\eta}&=Q\eta+A_{12}y,\label{eq:normalform1}\\
0&=-\sum\limits_{i=0}^{\nu-1}E_{23}N^{i}E_{32}y^{(i+2)}-E_{22}\dot{y}+A_{22}y+A_{21}\eta+u,\label{eq:normalform2}\\
x_3&=\sum\limits_{i=0}^{\nu-1}N^{i}E_{32}y^{(i+1)}\label{eq:normalform3}
\end{align}
\end{subequations}
holds in the distributional sense.
In particular, the zero dynamics of $[E,A,B,C]$ are asymptotically stable if, and only if, any eigenvalue of $Q$ has negative real part.\\
Further note that $\eta\in\cL^{1}_{\loc}(\R\to\R^{n_1})$, $y\in\cL^{1}_{\loc}(\R\to\R^{p})$  together with \eqref{eq:normalform1} imply that $\eta\in\cW^{1,1}_{\loc}(\R\to\R^{n_1})$.
\end{remark}

\subsection{Truncated vector relative degree}\label{Sec:gen.vec.rel.deg}

Our aim in this section is to present a~suitable generalization of the concept of vector relative degree to differential-algebraic systems which are not necessarily regular. For regular systems a definition of this concept is given in~\cite[Def.~B.1]{Berg16b}.

\begin{definition}\label{def:DAE.line.vect.rel.deg}\leavevmode\index{vector relative degree}\index{relative degree!strict}\index{strict relative degree}
Let a~regular system $[E,A,B,C]\in \Sigma_{n,n,m,p}$  with transfer function $G(s)\in\R(s)^{p\times m}$ be given. We say that $[E,A,B,C]$ has {\it vector relative degree} $(r_1,\ldots, r_p)\in \Z^{1\times p}$, if there exists a matrix $\Gamma\in \R^{p\times m}$ with $\rk \Gamma=p$ and
$$
\lim\limits_{\lambda\to\infty}\diag(\lambda^{r_1},\dots, \lambda^{r_p})G(\lambda)=\Gamma.
$$
If the above holds with $r_1=\ldots=r_p=:r$, then we say that $[E,A,B,C]$ has {\it strict relative degree} $r$.
\end{definition}

Since this definition involves the transfer function, it is only applicable to regular systems. To avoid this limitation, we introduce a novel concept. Let us start by introducing the notion of column degree of a rational matrix. This generalizes the concept of column degree for polynomial matrices in~\cite[Sec.~2.4]{FuhrHelm15}.

\begin{definition}
For a rational function $r(s) = \frac{p(s)}{q(s)}\in\R(s)$ we define
\[
    \deg r(s):=\deg p(s) - \deg q(s).
\]
Further, for $r(s)=(r_1(s),r_2(s),\dots,r_p(s))^{\top}\in \R(s)^p$ we define
\[\deg r(s)=\max\limits_{1\le i\le p} \deg r_i(s).\]
\end{definition}

Note that the degree of a~rational function $r(s) = \frac{p(s)}{q(s)}$ is independent of the choice of~$p(s)$ and~$q(s)$, i.e., they do not need to be coprime.

If $[E,A,B,C]\in \Sigma_{l,n,m,p}$ has autonomous zero dynamics, then we can conclude from Remark~\ref{rem:zd} that $\left[\begin{smallmatrix}-sE+A&B\\C&0\end{smallmatrix}\right]\in\R(s)^{(l+p)\times(n+m)}$ possesses a~left inverse $L(s)\in \R(s)^{(n+m)\times(l+p)}$. Then we set
\begin{equation}\label{eq:Hs.def}
H(s):=-\begin{bmatrix}
0&I_m
\end{bmatrix}L(s)\begin{bmatrix}
0\\
I_p
\end{bmatrix}\in \R(s)^{m\times p}.
\end{equation}

\begin{remark}\label{rem:regsys}\leavevmode
\begin{enumerate}[a)]
\item\label{rem:regsysa} Assume that $[E,A,B,C]\in \Sigma_{l,n,m,p}$ has autonomous zero dynamics and is right-invertible. Then it has been shown in {\cite[Lem.~A.1]{Berg16b}}
that the rational matrix $H(s)\in \R(s)^{m\times p}$ is uniquely determined by $[E,A,B,C]$. Moreover, with the notation from Theorem~\ref{Thm:DAE.syst.inve}, we have
\begin{equation}H(s)=sE_{22}-A_{22}-A_{21}(sI_{n_1}-Q)^{-1}A_{12}-s^2E_{23}(sN-I_{n_3})^{-1}E_{32}.\label{eq:H(s)repr}\end{equation}
We stress that the above representation is independent of the transformation matrices~$W$ and~$T$ in~\eqref{DAE.Thomas.form}.

\item\label{rem:regsysb} If $[E,A,B,C]\in \Sigma_{n,n,m,m}$ has autonomous zero dynamics and is regular with transfer function $G(s)\in\R(s)^{m\times m}$, then, invoking~\eqref{eq:tfschur} and Remark~\ref{rem:rightinv}, it can be shown that $H(s)=G(s)^{-1}$, see also \cite[Rem.~A.4]{Berg16b}.
\end{enumerate}
\end{remark}

In view of Remark~\ref{rem:regsys}, we see that for any regular system $[E,A,B,C]\in \Sigma_{n,n,m,m}$ with transfer function $G(s)$ and vector relative degree $(r_1,\dots,r_m)$, we have
\begin{equation}
\begin{aligned}
&\lim\limits_{\lambda\to\infty}\diag(\lambda^{r_1},\dots, \lambda^{r_m})G(\lambda)=\Gamma\in \Gl_m(\R)\\
\Longleftrightarrow \quad &\lim\limits_{\lambda\to\infty}H(\lambda)\diag(\lambda^{-r_1},\dots, \lambda^{-r_m})=\Gamma^{-1}\in \Gl_m(\R),\end{aligned}\label{eq:invreldeg}
\end{equation}
with $H(s)$ as in \eqref{eq:Hs.def}. This motivates to use $H(s)$ instead of the transfer function $G(s)$ to define a generalization of the vector relative degree to DAE systems which are not necessarily regular.

\begin{definition}\label{def:DAE.line.gene.vect.rel.deg}
Assume that $[E,A,B,C]\in \Sigma_{l,n,m,p}$ is right-invertible and has autonomous zero dynamics. Let $H(s)\in\R(s)^{m\times p}$ be defined as in \eqref{eq:Hs.def} (which is well-defined by Remark~\ref{rem:regsys}~\ref{rem:regsysa}), $h_i(s) = H(s)e_i\in \R(s)^{m}$ for $i=1,\ldots,p$ and set $r_i=\max\{\deg h_i(s),0\}$. Let~$q$ be the number of nonzero entries of $(r_1,\ldots,r_p)$,
\begin{equation}\label{eq:gen.vec.rel.deg.def}
\hat{\Gamma}:=\lim\limits_{\lambda\to \infty}H(\lambda)\diag(\lambda^{-r_1},\dots, \lambda^{-r_p})\in \R^{m\times p},
\end{equation}
and  $\hat{\Gamma}_q\in \R^{m\times q}$ be the matrix which is obtained from $\hat{\Gamma}$ by deleting all the columns corresponding to $r_i=0$. Then we call $r=(r_1,\dots, r_p)\in \N_0^{1\times p}$ the {\it truncated vector relative degree} of the system $[E,A,B,C]$, if $\rk \hat{\Gamma}_q=q$.\\
A~truncated vector relative degree $(r_1,\dots, r_p)$ is called {\em ordered}, if $r_1\ge \ldots \ge r_p$.
\end{definition}\index{vector relative degree!truncated}\index{truncated vector relative degree}\index{vector relative degree!truncated ordered}\index{truncated vector relative degree!ordered}

\begin{remark}\label{rem:genvrd}\leavevmode
Let the system $[E,A,B,C]\in\Sigma_{l,n,m,p}$  be right invertible and have autonomous zero dynamics.
\begin{enumerate}[a)]
\item\label{rem:genvrda} Assume that $[E,A,B,C]$ has ordered truncated vector relative degree $(r_1,\dots,r_q,0,\ldots,0)$ with $r_q>0$. Then the matrices $\hat{\Gamma}$ and $\hat{\Gamma}_q$ in Definition~\ref{def:DAE.line.gene.vect.rel.deg} are related by
\[
{\hat{\Gamma}}_q={\hat{\Gamma}} \begin{bmatrix}
I_q\\
0
\end{bmatrix}.\]
\item\label{rem:genvrdb} Assume that $[E,A,B,C]$ has truncated vector relative degree $(r_1,\dots,r_p)\in \N_0^{1\times p}$. Consider a~permutation matrix $P_\sigma\in \R^{p\times p}$ induced by the permutation $\sigma:\{1,\ldots, p\}\to \{1,\ldots, p\}$. A straightforward calculation shows that $H_\sigma(s)$ as in~\eqref{eq:Hs.def} corresponding to $[E,A,B,P_\sigma C]$ satisfies $H_\sigma(s) = H(s) P_\sigma$, thus the system $[E,A,B,P_\sigma C]$ has truncated vector relative degree $(r_{\sigma(1)},\dots,r_{\sigma(p)})$.\index{permutation}\index{permutation!ordered}
    In particular, there exists a permutation $\sigma$ such that the output-permuted system $[E,A,B,P_\sigma C]$ has ordered truncated vector relative degree.

\item\label{rem:genvrdd}
Assume that $[E,A,B,C]$ has ordered truncated vector relative degree
$(r_1,\dots, r_p)\in \N_0^{1\times p}$. Using the notation from Theorem~\ref{Thm:DAE.syst.inve} and~\eqref{eq:H(s)repr}, we obtain that
\[
\begin{aligned}
\hat{\Gamma}&=\lim\limits_{\lambda\to \infty}H(\lambda)\diag(\lambda^{-r_1},\dots, \lambda^{-r_p})\\
&=\lim\limits_{\lambda\to \infty}\big[(\lambda E_{22}-A_{22})-A_{21}(\lambda I-Q)^{-1}A_{12}-\lambda^2E_{23}(\lambda N-I_{n_3})^{-1}E_{32}\big]\cdot\\&\qquad\quad\;\cdot\diag(\lambda^{-r_1},\dots, \lambda^{-r_p})\\
&=\lim\limits_{\lambda\to \infty}\left[\lambda E_{22}-A_{22}+\sum\limits_{k=0}^{\nu-1} \lambda^{k+2}E_{23}N^kE_{32}\right]\diag(\lambda^{-r_1},\dots, \lambda^{-r_p}).
\end{aligned}
\]

\item\label{rem:genvrde}
Consider a regular system $[E,A,B,C]\in \Sigma_{n,n,m,p}$. If $m>p$, then, in view of Remark~\ref{rem:zd}~\ref{rem:zda}, the zero dynamics of $[E,A,B,C]$ are not autonomous, because $\begin{smallbmatrix} -sE+A& B\\ C&0\end{smallbmatrix}$ has a non-trivial kernel over $\R(s)$. Therefore, such a system does not have a truncated vector relative degree, but a vector relative degree may exist. As an example consider the system $[E,A,B,C]\in \Sigma_{1,1,2,1}$ with $E=C=[1]$, $A=[0]$ and $B=[1,1]$, for which a truncated relative degree does not exist. However, the transfer function is given by $G(s) = s^{-1} [1, 1]$ and hence the system even has strict relative degree $r=1$.

If $m\le p$ and $[E,A,B,C]$ has a vector relative degree, then also a truncated vector relative degree exists. This can be seen as follows: First observe that, as a consequence of Definition~\ref{def:DAE.line.vect.rel.deg}, $p\le m$ and hence we have $p=m$. Therefore, the matrix~$\Gamma\in\R^{m\times m}$ in Definition~\ref{def:DAE.line.vect.rel.deg} is invertible. Let $F(s) := \diag(\lambda^{r_1},\dots, \lambda^{r_m}) G(s)$, then $F(s) = \Gamma + G_{\rm sp}(s)$ for some {\it strictly proper}\index{transfer function!strictly proper} $G_{\rm sp}(s)\in\R(s)^{m\times m}$, i.e., $\lim_{\lambda\to\infty} G_{\rm sp}(\lambda) = 0$. Then $\tilde G(s) := -\Gamma^{-1} G_{\rm sp}(s)$ is strictly proper as well. Let $p(s)\in\R(s)^m$ be such that $F(s) p(s) =0$, then $p(s) = \tilde G(s) p(s)$. A component-wise comparison of the degrees yields that
\[
   \forall\, i=1,\ldots,m:\ \deg p_i(s) = \deg \sum_{j=1}^m \tilde G_{ij}(s) p_j(s) \le \max_{j=1,\ldots,m} \big(\deg p_j(s) -1\big),
\]
because $\deg \tilde G_{ij}(s) \le -1$ for all $i,j=1,\ldots,m$. Therefore,
\[
    \max_{i=1,\ldots,m} \deg p_i(s) \le \max_{j=1,\ldots,m} \big(\deg p_j(s) -1\big) = \left(\max_{j=1,\ldots,m} \deg p_j(s)\right) - 1,
\]
a contradiction. This shows that $F(s)$ is invertible over $\R(s)$ and hence $G(s)$ is invertible over $\R(s)$. Then Remark~\ref{rem:rightinv} yields that $[E,A,B,C]$ is right-invertible and has autonomous zero dynamics. Moreover, Remark~\ref{rem:regsys}~\ref{rem:regsysb} gives that $H(s)=-G(s)^{-1}$ and hence it follows that a truncated vector relative degree exists with $\hat\Gamma = -\Gamma^{-1}$ as in~\eqref{eq:gen.vec.rel.deg.def}.

\item\label{Rem:gen.vec.rel.degb} If $[E,A,B,C]\in \Sigma_{n,n,m,m}$ is regular and has autonomous zero dynamics, then $[E,A,B,C]$ has truncated vector relative degree $(0,\dots,0)$ if, any only if, the transfer function $G(s)\in\R(s)^{m\times m}$ of $[E,A,B,C]$ is {\it proper}, i.e.,\index{transfer function!proper} $\lim_{\lambda\to\infty}G(\lambda)\in\R^{m\times m}$ exists. This is an immediate consequence of the fact that, by Remark~\ref{rem:regsys}~\ref{rem:regsysb}, the matrix $H(s)$ in~\eqref{eq:Hs.def} satisfies $G(s)^{-1}$.

\item
A motivation for the definition of the truncated vector relative degree, even when only regular systems are considered, is given by output feedback control: Whilst the regular system $[E,A,B,C]\in \Sigma_{2,2,1,1}$ with
\[E=\begin{bmatrix}0&1\\0&0\end{bmatrix},\;\;A=\begin{bmatrix}1&0\\0&1\end{bmatrix},\;\; B=\begin{bmatrix}0\\1\end{bmatrix},\;\; C=\begin{bmatrix}1&0\end{bmatrix}\]
has transfer function $G(s)=-s$ and thus vector relative degree $(r_1)=(-1)$, application of the static output feedback $u(t)=Ky(t)+v(t)$ with new input $v$ leads to the system $[E,A+BKC,B,C]$ with transfer function $G_K(s)=\frac{-K}{1+K s}$. We may infer that the vector relative degree of $[E,A+BKC,B,C]$ is zero unless $K=0$, thus the vector relative degree is not invariant under output feedback in general.
\end{enumerate}\end{remark}

In the following we show that the truncated vector relative degree is however invariant under static output feedback.\index{output feedback!static}\index{output feedback}

\begin{proposition}
Let $[E,A,B,C]\in \Sigma_{l,n,m,p}$ and $K\in \R^{m\times p}$ be given. Then the following statements hold:
\begin{enumerate}[a)]
\item $\cZD_{[E,A,B,C]}$ are autonomous if, and only if, $\cZD_{[E,A+BKC,B,C]}$ are autonomous.
\item $[E,A,B,C]$ is right-invertible if, and only if, $[E,A+BKC,B,C]$ is right-invertible.
\item $[E,A,B,C]$ has a truncated vector relative degree if, and only if,  $[E,A+BKC,B,C]$  has a truncated vector relative degree. In this case, the truncated vector relative degrees of $[E,A,B,C]$ and $[E,A+BKC,B,C]$ coincide.
\end{enumerate}\end{proposition}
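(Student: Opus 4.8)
The plan is to reduce all three assertions to a single algebraic observation about the system pencils. Writing $S(s) := \begin{bmatrix}-sE+A & B\\ C & 0\end{bmatrix}$ and $S_K(s) := \begin{bmatrix}-sE+A+BKC & B\\ C & 0\end{bmatrix}$, a direct block computation shows that
\[
S_K(s) = U\, S(s), \qquad U := \begin{bmatrix} I_l & BK\\ 0 & I_p\end{bmatrix}\in\Gl_{l+p}(\R),
\]
with $U^{-1} = \begin{bmatrix} I_l & -BK\\ 0 & I_p\end{bmatrix}$. Equivalently, on the level of behaviours the map $\Phi\colon(x,u,y)\mapsto(x,u-Ky,y)$ is a bijection from $\fB_{[E,A,B,C]}$ onto $\fB_{[E,A+BKC,B,C]}$ (with inverse $(x,\tilde u,y)\mapsto(x,\tilde u+Ky,y)$), since on $y=Cx$ one has $(A+BKC)x+B(u-Ky)=Ax+Bu$ and the smoothness condition on $Ex$ is identical; note that $\Phi$ leaves the $x$- and $y$-components untouched. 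Everything will follow from these two reformulations.

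For a): since $U$ is invertible over $\R$, hence over $\R(s)$, we have $\ker_{\R(s)}S_K(s)=\ker_{\R(s)}S(s)$, so by Remark~\ref{rem:zd}~\ref{rem:zda} the zero dynamics of the two systems are simultaneously autonomous. (Alternatively, one observes directly that $\Phi$ restricts to the identity on $\{(x,u,y)\mid y=0\}$, whence $\cZD_{[E,A,B,C]}=\cZD_{[E,A+BKC,B,C]}$ even as sets.) For b): given $y\in\cC^\infty(\R\to\R^p)$, a pair $(x,u)$ with $(x,u,y)\in\fB_{[E,A,B,C]}$ exists if and only if $(x,u-Ky,y)\in\fB_{[E,A+BKC,B,C]}$ (and $u-Ky\in\cL^1_{\loc}$ whenever $u$ is), so $\Phi$ transfers right-invertibility in both directions.

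For c), the crux, I would first note that by a) and b) the standing hypotheses (right-invertibility and autonomous zero dynamics) hold for one system if and only if they hold for the other, so $H(s)$ and $H_K(s)$ from \eqref{eq:Hs.def} are simultaneously well-defined. If $L(s)$ is a left inverse of $S(s)$, then $L(s)U^{-1}$ is a left inverse of $S_K(s)$; partitioning $L(s)$ conformably (rows $n,m$; columns $l,p$), the identity $L(s)S(s)=I_{n+m}$ forces its lower-left block $L_{21}(s)\in\R(s)^{m\times l}$ to satisfy $L_{21}(s)B=I_m$. Substituting $L_K(s)=L(s)U^{-1}$ and $U^{-1}\begin{bmatrix}0\\ I_p\end{bmatrix}=\begin{bmatrix}-BK\\ I_p\end{bmatrix}$ into the definition of $H_K(s)$ then yields
\[
H_K(s) = H(s) + L_{21}(s)BK = H(s) + K,
\]
so that the two candidate matrices differ only by the constant $K\in\R^{m\times p}$.

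It then remains to check that adding a constant matrix alters neither the degrees $r_i$ nor the relevant part of $\hat\Gamma$. Writing $h_i(s)$ and $h_i(s)+Ke_i$ for the $i$-th columns of $H(s)$ and $H_K(s)$: if $\deg h_i(s)>0$ the leading (positive-degree) term is unaffected by the constant, so both the degree and its leading coefficient are unchanged; if $\deg h_i(s)\le 0$ the column remains proper and $r_i=0$ for both. Hence the tuples $(r_1,\dots,r_p)$ agree, and since $\lambda^{-r_i}\to 0$ exactly on the indices with $r_i>0$, the matrix $\hat\Gamma$ of \eqref{eq:gen.vec.rel.deg.def} changes only in the columns indexed by $r_i=0$ — precisely the columns deleted to form $\hat\Gamma_q$. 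Therefore $\hat\Gamma_q$ coincides for both systems, so the rank condition $\rk\hat\Gamma_q=q$ holds for one if and only if it holds for the other, which gives both the equivalence of existence and the equality of the truncated vector relative degrees. I expect the only delicate points to be the block identity $L_{21}(s)B=I_m$ and the degree bookkeeping in this last step; once $H_K=H+K$ is established, the remainder is routine.
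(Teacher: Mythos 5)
Your proposal is correct and follows essentially the same route as the paper's proof: the same factorization \eqref{eq:feedbackmatr} for part a), the same behavioral bijection $(x,u,y)\mapsto(x,u-Ky,y)$ for part b), and for part c) the same left-inverse manipulation with $L_{21}(s)B=I_m$ showing that $H_K(s)$ and $H(s)$ differ by the constant matrix $K$, followed by the same degree bookkeeping. (The only discrepancy is a sign: you obtain $H_K=H+K$, which is the consistent consequence of the minus sign in \eqref{eq:Hs.def}, whereas the paper writes $H_K=H-K$; this is immaterial, since a constant perturbation leaves the positive-degree columns, and hence $\hat\Gamma_q$ and the rank condition, unchanged either way.)
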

\begin{proof}
\begin{enumerate}[a)]
\item This follows from Remark~\ref{rem:zd}~\ref{rem:zda} together with
\begin{equation}
\begin{bmatrix}
-sE+A+BKC&B\\
C&0
\end{bmatrix}=\begin{bmatrix}
I_l&BK\\
0&I_p
\end{bmatrix}
\begin{bmatrix}
-sE+A&B\\
C&0
\end{bmatrix}.
\label{eq:feedbackmatr}
\end{equation}
\item Since $[E,A+BKC,B,C]$ is obtained from $[E,A,B,C]$ by output feedback $u(t)=Ky(t)+v(t)$ with new input $v\in\cL^1_{\loc}(\R\to\R^m)$, we obtain that $(x,u,y)\in\fB_{[E,A,B,C]}$ if, and only if, $(x,u-Ky,y)\in\fB_{[E,A+BKC,B,C]}$. In particular, the set of generated outputs of $[E,A,B,C]$ and $[E,A+BKC,B,C]$ are the same, whence $[E,A,B,C]$ is right-invertible if, and only if, $[E,A+BKC,B,C]$ is right-invertible.
\item Since $[E,A,B,C]$ is obtained from  $[E,A+BKC,B,C]$ by applying the feedback $-K$, it suffices to prove one implication. In view of Remark~\ref{rem:genvrd}~\ref{rem:genvrdb}, it is no loss of generality to assume that $[E,A,B,C]$ has ordered truncated vector relative degree $(r_1,\dots, r_q,0\ldots, 0)\in \N_0^{1\times p}$ with $r_q>0$.  Let
    $L(s), L_K(s)\in \R(s)^{(n+m)\times (l+p)}$ be left inverses of
    \[\begin{bmatrix}
-sE+A&B\\
C&0
\end{bmatrix}\text{ and }\begin{bmatrix}
-sE+A+BKC&B\\
C&0
\end{bmatrix},\quad \text{resp.,}\]
and partition
\[L(s)=\begin{bmatrix}L_{11}(s)&L_{12}(s)\\L_{21}(s)&H(s)\end{bmatrix}.\]
From~\eqref{eq:feedbackmatr} it follows that $L(s)\begin{smallbmatrix}
I_l&-BK\\
0&I_p
\end{smallbmatrix}$ is a left inverse of $\begin{smallbmatrix}
-sE+A+BKC&B\\
C&0
\end{smallbmatrix}$. Since $H_K(s) = [0, I_m] L_K(s) \begin{smallbmatrix} 0\\ I_p\end{smallbmatrix}$ is independent of the choice of the left inverse $L_K(s)$ by Remark~\ref{rem:regsys}~\ref{rem:regsysa}, we may infer that
$$
\begin{aligned}
H_K(s)&=\begin{bmatrix}
0&I_m
\end{bmatrix}L(s)\begin{bmatrix}
I_n&-BK\\
0&I_m
\end{bmatrix}\begin{bmatrix}
0\\
I_p
\end{bmatrix}\\
&=\begin{bmatrix}
0&I_m
\end{bmatrix}\begin{bmatrix}L_{11}(s)&L_{12}(s)\\L_{21}(s)&H(s)\end{bmatrix}\begin{bmatrix}
-BK\\
I_p
\end{bmatrix}\\
&=H(s)-L_{21}(s)BK.\\
\end{aligned}
$$
The relation $L(s)\left[\begin{smallmatrix}
-sE+A&B\\
C&0
\end{smallmatrix}\right]=I_{n+m}$ leads to $L_{21}(s)B=I_m$.
Therefore, $H_K(s) = H(s) - K$ and we find
$$\begin{aligned}
\hat\Gamma_K&=\lim\limits_{\lambda\to \infty}H_K(\lambda)\diag(\lambda^{-r_1},\dots, \lambda^{-r_q}, 1, \dots, 1)\\
&=\hat\Gamma-\lim\limits_{\lambda\to \infty} K\diag(\lambda^{-r_1},\dots, \lambda^{-r_q}, 1, \dots, 1).
\end{aligned}$$
This implies that $\hat{\Gamma}_K\left[\begin{smallmatrix}
I_q\\
0
\end{smallmatrix}\right]=\hat{\Gamma}_q$, and thus \[\rk \hat{\Gamma}_K\left[\begin{smallmatrix}
I_q\\
0
\end{smallmatrix}\right]=\rk\hat{\Gamma}_q=q.\] Therefore, the truncated vector relative degree of the feedback system $[E,A+BKC,B,C]$ is $(r_1,\dots, r_q,0\dots, 0)$, i.e., that of $[E,A,B,C]$.\qedhere
\end{enumerate}
\end{proof}

\begin{remark}\label{Rem:gen.vec.rel.deg}\leavevmode
\begin{enumerate}[a)]
\item\label{Rem:gen.vec.rel.dega}  The truncated vector relative degree of a right-invertible system with autonomous zero dynamics does not necessarily exist: For instance, consider $[E,A,B,C]\in\Sigma_{4,4,2,2}$ with
$$
E=\begin{bmatrix}
1&0&0&0\\
0&1&1&0\\
0&1&1&0\\
0&0&0&0
\end{bmatrix},\;\; A=\begin{bmatrix}
-1&0&0&0\\
0&1&-1&0\\
0&1&2&0\\
0&0&0&1
\end{bmatrix},\;\; B=\begin{bmatrix}
0&0\\
1&0\\
0&1\\
0&0
\end{bmatrix},\;\; C=\begin{bmatrix}
0&1&0&0\\
0&0&1&0
\end{bmatrix}.
$$
For this system, we have
$$
H(s)=\begin{bmatrix}
s-1&s+1\\
s-1&s-2
\end{bmatrix}.
$$
Moreover,
$$
\hat{\Gamma}=\lim\limits_{\lambda\to \infty}H(\lambda)\diag(\lambda^{-1},\lambda^{-1})=\begin{bmatrix}
1&1\\
1&1
\end{bmatrix} = \hat{\Gamma}_q.
$$
Since $\rk \hat{\Gamma}_q=1<2$, which is the number of columns of $H(s)$ with positive degree. Hence, this system does not have a truncated vector relative degree.

\item\label{Rem:gen.vec.rel.degc} There exist right-invertible regular systems with autonomous zero dynamics with the property that the truncated vector relative degree exists, but the
vector relative degree according to Definition~\ref{def:DAE.line.vect.rel.deg} does not exist. For instance, consider $[E,A,B,C]\in\Sigma_{5,5,2,2}$ with
\begin{equation}
E=\begin{bmatrix}
1&0&0&0&0\\
0&1&0&1&0\\
0&-1&0&0&0\\
0&0&0&0&1\\
0&1&0&0&0\\
\end{bmatrix},\;\; A=\begin{bmatrix}
-1&1&-2&0&0\\
3&5&0&0&0\\
0&0&0&0&0\\
0&0&0&1&0\\
0&0&0&0&1\\
\end{bmatrix},\;\;
B=\begin{bmatrix}
0&0\\
1&0\\
0&1\\
0&0\\
0&0\\
\end{bmatrix},\;\; C=\begin{bmatrix}
0&1&0&0&0\\
0&0&1&0&0\\
\end{bmatrix}.
\label{eq:exlin}
\end{equation}
Then
$$
G(s)=C(sE-A)^{-1}B=\begin{bmatrix}
0&-\tfrac{1}{s}\\
\tfrac{s+1}{6}&\tfrac{s^4+s^3+s^2-4s-8}{6s}
\end{bmatrix}.
$$
We have
$$
\Gamma:=\lim\limits_{\lambda\to \infty}\diag(\lambda,\lambda^{-3})G(\lambda)=\begin{bmatrix}
0&-1\\
0&\tfrac{1}{6}
\end{bmatrix}, \text{ and } \rk\Gamma=1<2.
$$
This implies that the system does not have vector relative degree in the sense of Definition~\ref{def:DAE.line.vect.rel.deg}. Invoking Remark~\ref{rem:regsys}~\ref{rem:regsysb}, we obtain
$$
H(s)=G(s)^{-1}=\begin{bmatrix}
\tfrac{s^4+s^3+s^2-4s-8}{s+1}&\tfrac{6}{s+1}\\
-s&0
\end{bmatrix},
$$
and
$$
\hat{\Gamma}:=\lim\limits_{\lambda\to \infty}H(\lambda)\diag(\lambda^{-3},1)=\begin{bmatrix}
1&0\\
0&0
\end{bmatrix}\; \text{ and } \hat{\Gamma}_q=\begin{bmatrix}
1\\
0
\end{bmatrix}.
$$
Then $\rk\hat{\Gamma}_q=1 = q$, and consequently this system has truncated vector relative degree $(3,0)$.
\end{enumerate}
\end{remark}

\subsection{A~representation for systems with truncated vector relative degree}\label{Sec:norm.form}

For ODE systems, {\sc Byrnes} and {\sc Isidori} have introduced a~normal
form under state space transformation which allows to read off the relative degree and internal dynamics\index{internal dynamics} \cite{ByrnIsid84,Isid95}. This normal form plays an important role in designing local and global stabilizing feedback controllers for nonlinear systems~\cite{ByrnIsid85,ByrnIsid88,ByrnIsid89}, adaptive observers \cite{NicoTorn89},\index{observer!adaptive} and adaptive controllers\index{controller!adaptive}\index{adaptive!controller}\index{adaptive!observer} \cite{IlchRyan94,IlchTown93}. A~normal form for linear ODE systems with vector relative degree has been developed in~\cite{Muel09a}. Further, a~normal form for regular linear DAE systems with strict relative degree has been derived in~\cite{BergIlch12b}, whereas
a~normal form for regular linear differential-algebraic systems with proper inverse transfer function in~\cite{BergIlch12a}. The latter has been extended to (not necessarily regular) DAE systems with truncated vector relative degree pointwise less or equal to one in~\cite{Berg16b}, although this notion was not used there. Note that the concept of truncated vector relative degree encompasses systems governed by ODEs with strict or vector relative degree as well as regular DAE systems with strict relative degree (up to some extent, cf.\ Remark~\ref{rem:genvrd}~\ref{rem:genvrde}) or proper inverse transfer function, and we introduce a~novel representation which comprises
all the aforementioned results.

Assume that $[E,A,B,C]\in\Sigma_{l,n,m,p}$ is right-invertible, has autonomous zero dynamics and has possesses a~truncated vector relative degree $(r_1,\ldots,r_p)\in\N_0^{1\times p}$. By Remark~\ref{rem:genvrd}~\ref{rem:genvrdb}, it is further no loss of generality to assume that the latter is ordered, i.e., $r_1\ge\ldots\ge r_q>0 = r_{q+1} = \ldots = r_p$. Introduce the polynomial matrix
\[
    F(s) :=  sE_{22}-A_{22}+\sum\limits_{k=0}^{\nu-1} s^{k+2}E_{23}N^kE_{32} \in\R(s)^{m\times p}.
\]
By Remark~\ref{rem:genvrd}~\ref{rem:genvrdd} we have
\begin{align*}
\hat{\Gamma}&=\lim\limits_{\lambda\to \infty}H(\lambda)\diag(\lambda^{-r_1},\dots, \lambda^{-r_q}, 1, \dots, 1)=\lim\limits_{\lambda\to \infty} F(\lambda) \diag(\lambda^{-r_1},\dots, \lambda^{-r_q}, 1, \dots, 1)\\
&=\begin{bmatrix}
\hat{\Gamma}_{11}&\hat{\Gamma}_{12}\\
\hat{\Gamma}_{21}&\hat{\Gamma}_{22}\\
\end{bmatrix} \in \R^{m\times p},
\end{align*}
where the latter partition is with $\hat{\Gamma}_{11}\in \R^{q\times q}$, $\hat{\Gamma}_{12}\in \R^{q\times (p-q)}$, $\hat{\Gamma}_{21}\in \R^{(m-q)\times q}$ and $\hat{\Gamma}_{22}\in \R^{(m-q)\times (p-q)}$.
Then Definition~\ref{def:DAE.line.gene.vect.rel.deg} yields
$$
\rk\begin{bmatrix}
\hat{\Gamma}_{11}\\
\hat{\Gamma}_{21}
\end{bmatrix}=\rk\hat{\Gamma}\begin{bmatrix}
I_q\\
0
\end{bmatrix}=q.
$$
Let $h\in\N$ be such that $r_h>1$ and $r_{h+1}=1$. Denote the $j$th column of a~matrix $M$ by $M^{(j)}$. Then
\begin{align*}
\hat{\Gamma}
&=\lim\limits_{\lambda\to \infty} F(\lambda) \diag(\lambda^{-r_1},\dots, \lambda^{-r_q}, 1, \dots, 1)\\
&=\begin{bmatrix}
E_{23}N^{r_1-2}E^{(1)}_{32} & \ldots & E_{23}N^{r_h-2}E^{(h)}_{32} & E^{(h+1)}_{22}&\ldots&E^{(q)}_{22}&-A^{(q+1)}_{22}&\ldots&-A^{(p)}_{22}
\end{bmatrix},
\end{align*}
and thus
\begin{equation}\label{eq:DAE.line.Gamma.p}
\hat{\Gamma}_q=\begin{bmatrix}
\hat{\Gamma}_{11}\\
\hat{\Gamma}_{21}
\end{bmatrix}=\begin{bmatrix}
E_{23}N^{r_1-2}E^{(1)}_{32} & \ldots & E_{23}N^{r_h-2}E^{(h)}_{32} & E^{(h+1)}_{22}&\ldots&E^{(q)}_{22}
\end{bmatrix}\in \R^{m\times q}.
\end{equation}
Since $\rk\hat{\Gamma}_q=q$, by reordering the inputs\index{input!renumbering} and -- accordingly -- reording the rows of $A_{21}$, $E_{22}$, $A_{22}$ and $E_{23}$, it is no loss of generality to assume that the first $q$ rows of $\hat{\Gamma}_q$ are linearly independent, thus $\hat{\Gamma}_{11}\in \Gl_q(\R)$.
Consider the matrix
\begin{equation}\label{eq:DAE.line.Gamma.form}
\Gamma:=\begin{bmatrix}
\Gamma_{11}&0\\
\Gamma_{21}&I_{m-q}\\
\end{bmatrix}\in \Gl_m(\R),
\end{equation}
where $\Gamma_{11}=\hat{\Gamma}^{-1}_{11}\in \Gl_q(\R)$, $\Gamma_{21}=-\hat{\Gamma}_{21}\hat{\Gamma}^{-1}_{11}\in \R^{(m-q)\times q}$, then
\begin{equation}\label{eq:DAE.line.Gamma.form2}
\Gamma\hat{\Gamma}_q=\begin{bmatrix}
I_q\\
0\\
\end{bmatrix}.
\end{equation}
On the other hand, using the notation from Theorem~\ref{Thm:DAE.syst.inve} and invoking Remark~\ref{Rem:DAE.syst.inve}, we have that
$(x,u,y)\in \fB_{[E,A,B,C]}$ if, and only if,
$Tx=(\eta^{\top},y^{\top},x_3^{\top})^{\top}\in \cL^{1}_{\loc}(\R\to\R^{n_1+p+n_3})$
solves~\eqref{eq:normalform} in the distributional sense, and the components satisfy~\eqref{eq:smoothness}. Since~\eqref{eq:normalform2} can be written as $F(\ddts)y = A_{21}\eta + u$, by construction of $\hat \Gamma_q$ and~\eqref{eq:DAE.line.Gamma.p} we may rewrite this as
\begin{equation}
\begin{aligned}\hat{\Gamma}_q\begin{pmatrix}y_1^{(r_1)}\\\vdots\\y_q^{(r_q)}\end{pmatrix}=&\,M_1\begin{pmatrix}y_1\\\vdots\\y_1^{(r_1-1)}\end{pmatrix}+\ldots+M_q
\begin{pmatrix}y_q\\\vdots\\y_q^{(r_q-1)}\end{pmatrix}+M\begin{pmatrix}y_{q+1}\\\vdots\\y_m\end{pmatrix}+A_{21}\eta+u\end{aligned}\label{eq:secondeq}
\end{equation}
for some $M_1\in\R^{m\times r_1},\ldots,M_q\in\R^{m\times r_q}$, $M\in\R^{m\times (p-q)}$ which can be constructed from the columns of
$E_{23}N^{i}E_{32}$, $E_{22}$ and $A_{22}$, $i=0,\ldots,r_1$. Define $R_{j,1}\in\R^{q\times r_j}$, $R_{j,2}\in\R^{(m-q)\times r_j}$ for $j=1,\ldots,q$ and $S_1\in\R^{q\times (p-q)}$, $S_2\in\R^{(m-q)\times (p-q)}$, $P_1\in\R^{q\times n_1}$, $P_2\in\R^{(m-q)\times n_1}$
by
\begin{equation}\begin{bmatrix}R_{j,1}\\R_{j,2}\end{bmatrix}:=\Gamma M_j,\quad j=1,\ldots,q,\qquad \begin{bmatrix}S_1\\S_2\end{bmatrix}:=\Gamma M,\qquad \begin{bmatrix}P_1\\P_2\end{bmatrix}:=\Gamma A_{21}.\label{eq:RSdef}\end{equation}
%
By a~multiplication of \eqref{eq:secondeq} from the left with $\Gamma\in\Gl_{m}(\R)$, we obtain that, also invoking~\eqref{eq:normalform1} and~\eqref{eq:normalform3},
\begin{equation}
\begin{aligned}
\dot{\eta}&=Q\eta+A_{12}y,\\
\begin{pmatrix}y_1^{(r_1)}\\\vdots\\y_q^{(r_q)}\end{pmatrix}&=R_{1,1}\begin{pmatrix}y_1\\\vdots\\y_1^{(r_1-1)}\end{pmatrix}\!+\ldots+R_{q,1}
\begin{pmatrix}y_q\\\vdots\\y_q^{(r_q-1)}\end{pmatrix}\!+S_1\begin{pmatrix}y_{q+1}\\\vdots\\y_m\end{pmatrix}+
P_1\eta+\Gamma_{11}\begin{pmatrix}
u_1\\
\vdots\\
u_q
\end{pmatrix},\\
0&=R_{1,2}\begin{pmatrix}y_1\\\vdots\\y_1^{(r_1-1)}\end{pmatrix}+\ldots+R_{q,2}
\begin{pmatrix}y_q\\\vdots\\y_q^{(r_q-1)}\end{pmatrix}+S_2\begin{pmatrix}y_{q+1}\\\vdots\\y_m\end{pmatrix}\\&\quad+
P_2 \eta+\Gamma_{21}\begin{pmatrix}
u_1\\
\vdots\\
u_q
\end{pmatrix}+\begin{pmatrix}
u_{q+1}\\
\vdots\\
u_m
\end{pmatrix},\\
x_3&=\sum\limits_{i=0}^{\nu-1}N^{i}E_{32}y^{(i+1)}.
\label{DAE.line.norm.form}\end{aligned}\end{equation}

We have thus derived a representation for systems with truncated vector relative degree and summarize the findings in the following result.

\begin{theorem}\label{Thm:DAE.line.norm.form}
Let a~right-invertible system $[E,A,B,C]\in \Sigma_{l,n,m,p}$ with autonomous zero dynamics be given. Assume that $[E,A,B,C]$ has ordered truncated vector relative degree $(r_1,\dots,r_q,0,\dots,0)$ with $r_q>0$. Use the notation from Theorem~\ref{Thm:DAE.syst.inve},~\eqref{eq:DAE.line.Gamma.form} and~\eqref{eq:RSdef}.
Then $(x,u,y)\in \fB_{[E,A,B,C]}$, 
if, and only if,  after a reordering of the inputs so that $\hat\Gamma_{11}$ in~\eqref{eq:DAE.line.Gamma.p} is invertible,
\[Tx=(\eta^{\top},y^{\top},x_3^{\top})^{\top}\in \cL^{1}_{\loc}(\R\to\R^{n_1+p+n_3})\]
satisfies the smoothness conditions in \eqref{eq:smoothness} and solves~\eqref{DAE.line.norm.form} in the distributional sense.
\end{theorem}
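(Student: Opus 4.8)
The plan is to reduce the theorem to a purely algebraic manipulation, since the decisive behavioral equivalence has already been supplied by Remark~\ref{Rem:DAE.syst.inve}. First I would invoke that remark: as $[E,A,B,C]$ is right-invertible with autonomous zero dynamics, Theorem~\ref{Thm:DAE.syst.inve} applies and $(x,u,y)\in\fB_{[E,A,B,C]}$ holds if, and only if, $Tx=(\eta^\top,y^\top,x_3^\top)^\top$ obeys the smoothness conditions~\eqref{eq:smoothness} and solves~\eqref{eq:normalform} in the distributional sense. The smoothness requirements of the theorem are precisely those of~\eqref{eq:smoothness}, and the equations~\eqref{eq:normalform1} and~\eqref{eq:normalform3} are literally the first and the last line of~\eqref{DAE.line.norm.form}. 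Hence everything reduces to proving that, after the announced reordering of the inputs, the single identity~\eqref{eq:normalform2} is equivalent to the two middle lines of~\eqref{DAE.line.norm.form}.

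Next I would rewrite~\eqref{eq:normalform2} as $F(\ddts)y=A_{21}\eta+u$ with the polynomial matrix $F(s)$ introduced above, and isolate the top-order derivatives. The crux is that for each index $j\le q$ the column $F^{(j)}(s)$ has degree exactly $r_j$: indeed, by~\eqref{eq:H(s)repr} the matrix $H(s)$ differs from $F(s)$ only by the strictly proper term $A_{21}(sI_{n_1}-Q)^{-1}A_{12}$, so the column degrees of $H(s)$ and $F(s)$ agree, and $r_j=\deg h_j(s)>0$. Consequently the leading coefficient of $F^{(j)}(s)$ is exactly the $j$th column of $\hat\Gamma_q$ recorded in~\eqref{eq:DAE.line.Gamma.p}, while every lower-order derivative together with the outputs of relative degree zero (each entering undifferentiated, since $F^{(j)}(s)=-A_{22}^{(j)}$ is constant for $j>q$) can be absorbed into the matrices $M_1,\dots,M_q,M$. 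This produces exactly~\eqref{eq:secondeq}.

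Finally I would left-multiply~\eqref{eq:secondeq} by the invertible matrix $\Gamma$ of~\eqref{eq:DAE.line.Gamma.form}. By the identity $\Gamma\hat\Gamma_q=\begin{smallbmatrix}I_q\\0\end{smallbmatrix}$ from~\eqref{eq:DAE.line.Gamma.form2}, the left-hand side splits into the vector $(y_1^{(r_1)},\dots,y_q^{(r_q)})^\top$ in its top $q$ entries and zero in the remaining $m-q$ entries; using the block form $\Gamma=\begin{smallbmatrix}\Gamma_{11}&0\\\Gamma_{21}&I_{m-q}\end{smallbmatrix}$ and the definitions~\eqref{eq:RSdef} of $R_{j,1},R_{j,2},S_1,S_2,P_1,P_2$, the right-hand side becomes precisely the right-hand sides of the two middle lines of~\eqref{DAE.line.norm.form}. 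Since $\Gamma\in\Gl_m(\R)$, this left-multiplication is reversible, so~\eqref{eq:normalform2} and the two middle equations are genuinely equivalent and not merely related by an implication; together with the observation that the input reordering is a bijective relabelling of the components of $u$ (a row permutation of~\eqref{eq:normalform2} bringing $q$ linearly independent rows of $\hat\Gamma_q$ to the top) that leaves $\fB_{[E,A,B,C]}$ unchanged, this finishes the argument.

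The step I expect to be the main obstacle is the extraction of the highest-order derivatives in the second paragraph: one must be certain that the coefficient of $(y_1^{(r_1)},\dots,y_q^{(r_q)})^\top$ is \emph{exactly} $\hat\Gamma_q$ and not some perturbation thereof, which is what the column-degree count via~\eqref{eq:H(s)repr} and the explicit evaluation~\eqref{eq:DAE.line.Gamma.p} guarantee. Once this coefficient matrix is pinned down, the invertibility of $\Gamma$ turns the remainder into transparent bookkeeping.
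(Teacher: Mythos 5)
Your proposal is correct and follows essentially the same route as the paper's own derivation (which appears in the text preceding the theorem): invoke Remark~\ref{Rem:DAE.syst.inve} to reduce to~\eqref{eq:normalform}, rewrite~\eqref{eq:normalform2} as $F(\ddts)y = A_{21}\eta + u$ with leading coefficients given exactly by $\hat\Gamma_q$ via~\eqref{eq:DAE.line.Gamma.p}, and left-multiply by the invertible matrix $\Gamma$ using $\Gamma\hat\Gamma_q = \begin{smallbmatrix} I_q\\ 0\end{smallbmatrix}$. Your column-degree argument comparing $F(s)$ and $H(s)$ through the strictly proper difference $A_{21}(sI_{n_1}-Q)^{-1}A_{12}$ is a slightly more explicit justification of what the paper extracts from Remark~\ref{rem:genvrd}~\ref{rem:genvrdd}, but it is the same idea.
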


\begin{remark}\label{Rems:DAE.normal.form.special.cases} \leavevmode
Consider a~regular and right-invertible system $[E,A,B,C]\in \Sigma_{n,n,m,m}$ with autonomous zero dynamics and ordered truncated vector relative degree $(r_1,\dots,r_q,0,\dots,0)\in \N_0^{1\times p}$ such that $r_q>0$.
\begin{enumerate}[a)]
\item\label{Rems:DAE.normal.form.special.casesb}
If $[E,A,B,C]$ has strict relative degree $r>0$, then $q=m$ and $r_1=\ldots=r_m=r$. In this case, the representation~\eqref{DAE.line.norm.form} simplifies to
\[\begin{aligned}
\dot{\eta}&=Q\eta+A_{12}y,\\
y^{(r)}&=R_{1,1}\begin{pmatrix}y_1\\\vdots\\y_1^{(r-1)}\end{pmatrix}+\ldots+R_{m,1}
\begin{pmatrix}y_m\\\vdots\\y_m^{(r-1)}\end{pmatrix}+
P_1 \eta+\Gamma_{11}u,\\
x_3&=\sum\limits_{i=0}^{\nu-1}N^{i}E_{32}y^{(i+1)}.
\end{aligned}
\]
Since the second equation can be rewritten as
\[y^{(r)}=Q_{r-1}y^{(r-1)}+\ldots+Q_0y+P_1\eta+\Gamma_{11}u\]
for matrices $Q_0,\ldots,Q_{r-1}$, this is exactly the form which has been developed in~\cite{BergIlch12b}.

\item If the transfer function $G(s)\in\R(s)^{m\times m}$ of $[E,A,B,C]$ has a~proper inverse,\index{transfer function!properly invertible} then we have that $H(s)=G(s)^{-1}$ (see Remark~\ref{rem:regsys}~\ref{rem:regsysb}) is proper, hence $q=0$ and the truncated vector relative degree is $(0,\ldots,0)$. In this case, the representation~\eqref{DAE.line.norm.form} simplifies to
\[
\begin{aligned}
\dot{\eta}&=Q\eta+A_{12}y,\\
0&=S_2y+P_2\eta+u,\\
x_3&=\sum\limits_{i=0}^{\nu-1}N^{i}E_{32}y^{(i+1)},
\end{aligned}
\]
which is exactly the form developed in \cite{BergIlch12a}.

\item If the system is an ODE, that is $E=I_n$, then its transfer function~$G(s)$ is strictly proper,\index{transfer function!strictly proper} i.e., $\lim_{\lambda\to\infty}G(\lambda)=0$. We can further infer from Remark~\ref{rem:rightinv} that the transfer function $G(s)\in\R(s)^{m\times m}$ is invertible. Then~\eqref{eq:invreldeg} implies $q=m$, i.e., the truncated vector relative degree (which coincides with the vector relative degree by Remark~\ref{rem:genvrd}~\ref{rem:genvrde}) is $(r_1,\ldots,r_m)\in\N^{1\times m}$.
     In this case,~\eqref{DAE.line.norm.form} simplifies to
\[\begin{aligned}
\dot{\eta}&=Q\eta+A_{12}y,\\
\begin{pmatrix}y_1^{(r_1)}\\\vdots\\y_m^{(r_m)}\end{pmatrix}&=R_{1,1}\begin{pmatrix}y_1\\\vdots\\y_1^{(r_1-1)}\end{pmatrix}+\ldots+R_{m,1}
\begin{pmatrix}y_m\\\vdots\\y_m^{(r_m-1)}\end{pmatrix}+ P_1\eta+\Gamma_{11}u,\\
x_3&=\sum\limits_{i=0}^{\nu-1}N^{i}E_{32}y^{(i+1)}.
\end{aligned}
\]
This form comprises the one presented in \cite{Muel09a}, where, additionally,
\begin{align*}
x_3&=(\dot{y}_1,\dots,y^{(r_1-1)}_1,\dots,\dot{y}_m,\dots,y^{(r_m-1)}_m)^{\top}\in \R^{n_3},\\
N&=\diag(N_1,\dots,N_m)\in \R^{n_3\times n_3} \text{ with } N_i=\left[\SmallNilBlock{0}{1}{2ex}\right]\in \R^{(r_i-1)\times(r_i-1)},\\
E_{32}&=\diag(e_{1}^{[r_1-1]},\dots,e_{1}^{[r_1-1]})\in \R^{n_3\times m},
\end{align*}
where $e_{1}^{[k]}\in \R^{k}$ is the first canonical unit vector.
We note that the above nilpotent matrix $N$ has index $\nu=\max\limits_{1\le i\le m}(r_i-1)$.
\end{enumerate}
\end{remark}

\section{Nonlinear systems with truncated vector relative degree}\label{sect:DAE.nonl.clas}

In this section, we consider a~class of nonlinear DAE systems which comprises the class of linear DAE systems which have a truncated vector relative degree and the same number of inputs and outputs. More precisely, we consider nonlinear functional differential-algebraic systems of the form\index{adaptive!tracking control}\index{tracking control}\index{tracking control!adaptive}\index{system!functional differential-algebraic}\index{system!nonlinear}
\begin{equation}\label{syst:DAE.nonl}
\begin{aligned}
\begin{pmatrix}
y_1^{(r_1)}(t)\\
y_2^{(r_2)}(t)\\
\vdots\\
y_q^{(r_q)}(t)\\
\end{pmatrix}&=f_1\Big(d_1(t), T_1\big(y_1,\ldots,y_1^{(r_1-1)},\ldots,y_q^{(r_q-1)},y_{q+1},\ldots,y_m\big)(t)\Big)\\[-0.7cm]
&\quad +\Gamma_{I}\Big(d_2(t),T_1\big(y_1,\ldots,y_1^{(r_1-1)},\ldots,y_q^{(r_q-1)},y_{q+1},\ldots,y_m\big)(t)\Big)\,u_{I}(t),\\[4mm]
0&=f_2\Big(y_1(t),\ldots,y_1^{(r_1-1)}(t),\ldots,y_q^{(r_q-1)}(t),y_{q+1}(t),\ldots,y_m(t)\Big)\\
&\quad +f_3\Big(d_3(t),(T_2y)(t)\Big)\!+\!\Gamma_{II}\Big(d_4(t),(T_2y)(t)\Big)\,u_{I}(t) \!+\!f_4\Big(d_5(t),(T_2y)(t)\Big)\,u_{II}(t),\\
y|_{[-h,0]}&=y^0
\end{aligned}
\end{equation}
with initial data
\begin{equation}\label{syst:DAE.nonl-IC}
\begin{aligned}
y^0=(y_1^0,y_2^0,\dots,y_m^0)^\top,\quad y_i^0&\in\cC^{r_i-1}([-h,0]\rightarrow \R),\quad i=1,\dots, q,\\
y_i^0&\in\cC([-h,0]\rightarrow \R),\quad i=q+1,\dots, m,
\end{aligned}
\end{equation}
where $f_1,\ldots,f_4$, $\Gamma_I$, $\Gamma_{II}$, $d_1,\ldots, d_5$ are functions and $T_1,T_2$ are operators with properties being specified in the sequel. The output is
$y=(y_1,\ldots,y_m)^\top$ and the input of the system is $u=(u_1,\ldots,u_m)^\top$, for which we set\index{input}\index{output}
\[u_I=(u_1,\dots,u_q)^{\top},\;\; u_{II}=(u_{q+1},\dots,u_m)^{\top},\] i.e., $u=(u_I^\top,u_{II}^\top)^\top$. The functions $d_1,\ldots, d_5:\R_{\ge 0}\to\R^s$ play the roles of disturbances.
We denote $\overline{r}=r_1+\ldots+r_p$
and call -- in virtue of Section~\ref{Sec:norm.form} --  the tuple $(r_1,\dots,r_p,0,\dots,0)\in \N_0^{1\times m}$ with $r_i>0$ for $i=1,\dots,q$ the {\em truncated vector relative degree} of~\eqref{syst:DAE.nonl}.\index{truncated vector relative degree}
We will later show that linear DAE systems which have a truncated vector relative degree belong to this class.
Similar to~\cite{BergIlch14}, we introduce the following classes of operators.

\begin{definition}\label{def:DAE.nonl.oper.T.clas}
For $m,k\in\N$ and $h\ge 0$ the set $\T_{m,k,h}$ denotes the class of operators $T:\cC([-h,\infty)\to\R^m)\to L^\infty_{\loc}(\R_{\ge 0}\to\R^k)$ with the following properties:
\begin{itemize}
\item[(i)]\index{operator}\index{operator!causal} $T$ is causal, i.e, for all $t\ge 0$ and all $\zeta, \xi \in \cC([-h,\infty)\to\R^m)$,
\[\zeta|_{[-h,t)}=\xi|_{[-h,t)}\ \ \Longrightarrow\ \ T(\zeta)|_{[0,t)} = T(\xi)|_{[0,t)}.\]
\item[(ii)]\index{operator!locally Lipschitz continuous}\index{Lipschitz continuous} $T$ is locally Lipschitz continuous in the following sense: for all $t\ge 0$ and all $\xi\in\cC([-h,t]\to\R^{m})$ there exist $\tau, \delta, c>0$ such that, for all $\zeta_1,\zeta_2\in \mathcal{C}([-h,\infty)\to\R^{m})$ with $\zeta_i|_{[-h,t]}=\xi$ and $\|\zeta_i(s)-\xi(t)\|<\delta$ for all $s\in[t,t+\tau]$ and $i=1,2$, we have
    \[
        \left\|\left(T(\zeta_1)-T(\zeta_2)\right)|_{[t,t+\tau]} \right\|_{\infty}        \le c\left\|(\zeta_1-\zeta_2)|_{[t,t+\tau]}\right\|_{\infty}.
    \]
\item[(iii)] $T$ maps bounded trajectories to bounded trajectories, i.e, for all $c_1>0$, there exists $c_2>0$ such that for all $\zeta\in \mathcal{C}([-h,\infty)\to\R^{m})$
\[ \|\zeta|_{[-h,\infty)}\|_\infty\le c_1\ \ \Longrightarrow\ \ \|T(\zeta)|_{[0,\infty)}\|_\infty \le c_2.\]
\end{itemize}
Furthermore, the set $\T_{m,k,h}^{\rm DAE}$ denotes the subclass of operators \[T:\;C([-h,\infty)\to\R^m)\to C^1(\R_{\ge 0}\to\R^k)\] such that $T\in\T_{m,k,h}$ and, additionally,
\begin{itemize}
\item[(iv)] there exist $z\in \cC(\R^m\times \R^k\to\R^k)$ and $\tilde{T}\in\T_{m,k,h}$ such that
$$
\forall\, \zeta\in \cC([-h,\infty)\to\R^m)\ \forall\, t\ge 0:\  \ddts (T\zeta)(t)=z\big(\zeta(t),(\tilde{T}\zeta)(t)\big).
$$
\end{itemize}
\end{definition}

\begin{assumption}\label{def:DAE.nonl.syst.class}
We assume that the functional differential-algebraic system~\eqref{syst:DAE.nonl} has the following properties:
\begin{itemize}
\item[(i)] the gain\index{gain matrix} $\Gamma_{I}\in \cC(\R^s\times \R^k\to\R^{q\times q})$ satisfies $\Gamma_{I}(d,\eta) + \Gamma_{I}(d,\eta)^\top > 0$ for all $(d,\eta)\in\R^s\times\R^k$, and $\Gamma_{II}\in \cC^1(\R^s\times \R^k\to\R^{(m-q)\times q})$.
\item[(ii)] the disturbances\index{disturbance} satisfy $d_1, d_2\in \cL^{\infty}(\R_{\ge 0}\to\R^s)$ and $d_3, d_4, d_5\in W^{1,\infty}(\R_{\ge 0}\to\R^s)$.
\item[(iii)] $f_1\in \cC(\R^s\times \R^k\to\R^q)$, $f_2\in \cC^1(\R^{\overline{r}+m-q}\to\R^{m-q})$, $f_3\in \cC^1(\R^s\times \R^k\to\R^{m-q})$, and $f'_2 \left[\begin{smallmatrix}
0\\
I_{m-q}
\end{smallmatrix}\right]$ is bounded.
\item[(iv)] $f_4\in \cC^1(\R^s\times\R^k\to\R)$ and there exists $\alpha>0$ such that $f_4(d,v)\ge \alpha$ for all $(d,v)\in \R^s\times\R^k$.
\item[(v)] $T_1\in \T_{\overline{r}+m-q,k,h}$ and $T_2\in\T_{m,k,h}^{\rm DAE}$.	
\end{itemize}
\end{assumption}

In the remainder of this section we show that any right-invertible system $[E,A,B,C]\in\Sigma_{l,n,m,m}$ with truncated vector relative degree $(r_1,\ldots,r_q,0,\ldots,0)$, where $r_1,\ldots,r_q\in\N$, belongs to the class of systems~\eqref{syst:DAE.nonl} which satisfy Assumption~\ref{def:DAE.nonl.syst.class} as long as $[E,A,B,C]$ has asymptotically stable zero dynamics and the matrix $\Gamma_{11}$ in~\eqref{DAE.line.norm.form} satisfies $\Gamma_{11}+\Gamma_{11}^\top>0$. We have seen in Remark~\ref{Rem:DAE.syst.inve} that asymptotic stability of the zero dynamics is equivalent to the matrix $Q$ in~\eqref{DAE.line.norm.form} having only eigenvalues with negative real part. 

Consider the three first equations in~\eqref{DAE.line.norm.form} and the operator\label{pp:operatorT2}
$$
\begin{aligned}
T_2:&&\cC([0,\infty)\to\R^m)\to &\;\cC^1(\R_{\ge 0}\to\R^{n_3})\\
&&y\mapsto&\, \left(t\mapsto(T_2y)(t):=\eta(t)=e^{Qt}\eta^0+\int\limits_{0}^{t}e^{Q(t-\tau)} A_{12}y(\tau)d\tau\right),
\end{aligned}$$
which is parameterized by the initial value $\eta^0\in\R^{n_1}$. This operator is clearly causal, locally Lipschitz, and, since all eigenvalues of $Q$ have negative real part, $T$ satisfies property~(iii) in Definition~\ref{def:DAE.nonl.oper.T.clas}. The derivative is given by
$$
\ddts (T_2y)(t)=Qe^{Qt}\eta^0+A_{12}y(t)+Q\int\limits_{0}^{t}e^{Q(t-\tau)} A_{12}y(\tau)d\tau =: (\tilde Ty)(t),\quad t\ge 0,
$$
and it is straightforward to check that $\tilde T\in\T_{m,n_3,0}$. Therefore, we obtain that $T_2\in\T_{m,n_3,0}^{\rm DAE}$. Further consider the operator $T_1:\cC([0,\infty)\to\R^{\overline{r}+m-q})\to L^\infty_{\loc}(\R_{\ge 0}\to\R^{q})$ defined by
\begin{align*}
  &T_1(\zeta_{1,1},\ldots,\zeta_{1,r_1},\ldots,\zeta_{q,r_q},\zeta_{q+1,1},\ldots,\zeta_{m,1})\\
  &= R_{1,1}\begin{pmatrix}\zeta_{1,1}\\\vdots\\\zeta_{1,r_1}\end{pmatrix}+\ldots+R_{q,1}
\begin{pmatrix}\zeta_{q,1}\\\vdots\\\zeta_{q,r_q}\end{pmatrix}+S_1\begin{pmatrix}\zeta_{q+1,1}\\\vdots\\\zeta_{m,1}\end{pmatrix} + P_1 T_2(\zeta_{1,1}, \zeta_{2,1},\ldots, \zeta_{m,1}),
\end{align*}
then, likewise, we obtain that $T_1\in \T_{\overline{r}+m-q,q,0}$. The remaining functions are given by
\begin{align*}
f_1(d,\eta)=\eta,\ \
\Gamma_{I}\big(d,\eta\big)=\Gamma_{11},\ \
f_3\big(d,\eta\big)=P_2 \eta,\ \
\Gamma_{II}\big(d,\eta\big)=\Gamma_{21},\ \
f_4\big(d,\eta\big)=1
\end{align*}
and
\begin{align*}
&f_2(\zeta_{1,1},\ldots,\zeta_{1,r_1},\ldots,\zeta_{q,r_q},\zeta_{q+1,1},\ldots,\zeta_{m,1})\\
  &= R_{1,2}\begin{pmatrix}\zeta_{1,1}\\\vdots\\\zeta_{1,r_1}\end{pmatrix}+\ldots+R_{q,2}
\begin{pmatrix}\zeta_{q,1}\\\vdots\\\zeta_{q,r_q}\end{pmatrix}+S_2\begin{pmatrix}\zeta_{q+1,1}\\\vdots\\\zeta_{m,1}\end{pmatrix}.
\end{align*}
The function $f_2$ satisfies condition (iii) in Assumption~\ref{def:DAE.nonl.syst.class} since
$$
f_2' \begin{bmatrix}
0\\
I_{m-q}
\end{bmatrix}=S_2\in \R^{(m-q)\times (m-q)}.
$$
Note that system \eqref{DAE.line.norm.form} does not entirely belong to the class~\eqref{syst:DAE.nonl} since the fourth equation in \eqref{DAE.line.norm.form} is not included. However, the control objective formulated in the following section can also be achieved for~\eqref{DAE.line.norm.form}, see also Remark~\ref{Rem:funnel-control}~\ref{Rem:funnel-control-e}.

\section{Funnel control}

\subsection{Control objective}\label{subsect:control.objective}
Let a~reference signal\index{reference signal} $y_{\rm ref}=(y_{{\rm ref},1},\ldots,y_{{\rm ref},m})^\top$ with $y_{{\rm ref},i}\in\cW^{r_i,\infty}(\R_{\ge 0}\to\R)$ for $i=1,\ldots,q$ and $y_{{\rm ref},i}\in W^{1,\infty}(\R_{\ge 0}\to\R)$ for $i=q+1,\ldots, m$ be given, and let
$e=y-y_{\rm ref}$ be the {\it tracking error}.
The objective is to design an output error feedback\index{feedback!output error}\index{output error feedback} of the form
\[u(t) = F\bigg(t,e_1(t),\ldots,e_1^{(r_1-1)}(t),\ldots,e_q^{(r_q-1)}(t), e_{q+1}(t),\ldots,e_m(t)\bigg),\]
such that in the closed-loop system the tracking error evolves within a prescribed performance funnel\index{performance funnel}
\begin{equation}
\cF^m_{\varphi} := \setdef{(t,e)\in\R_{\ge 0} \times\R^m}{\varphi(t) \|e\| < 1},\label{eq:perf_funnel}
\end{equation}
which is determined by a function $\varphi$ belonging to
\begin{equation}
\Phi_k:=
\setdef{
\varphi\in  \cC^k(\R_{\ge 0}\to\R)}{
\begin{array}{l}
\varphi, \dot \varphi,\ldots,\varphi^{(k)} \text{ are bounded,}\\
\varphi (\tau)>0 \text{ for all } \tau>0,\text{ and }  \liminf\limits_{\tau\rightarrow \infty} \varphi(\tau) > 0\!\!\!\!
\end{array}}.
\label{eq:Phir}\end{equation}
A~further objective is that all signals $u,e_1,\ldots,e_1^{(r_1-1)},\ldots,e_q^{(r_q-1)}, e_{q+1},\ldots,e_m:\R_{\ge0}\to\R^m$ should remain boun\-ded.

The funnel boundary\index{funnel!boundary} is given by the reciprocal of $\varphi$, see Fig.\ \ref{Fig:funnel}. It is explicitly allowed that $\varphi(0)=0$, meaning that no restriction on the initial value is imposed since $\varphi(0) \|e(0)\| < 1$; the funnel boundary $1/\varphi$ has a pole at $t=0$ in this case.
\begin{figure}[h!tp]
\begin{center}
\begin{tikzpicture}[scale=0.45]
\tikzset{>=latex}
  \filldraw[color=gray!25] plot[smooth] coordinates {(0.15,4.7)(0.7,2.9)(4,0.4)(6,1.5)(9.5,0.4)(10,0.333)(10.01,0.331)(10.041,0.3) (10.041,-0.3)(10.01,-0.331)(10,-0.333)(9.5,-0.4)(6,-1.5)(4,-0.4)(0.7,-2.9)(0.15,-4.7)};
  \draw[thick] plot[smooth] coordinates {(0.15,4.7)(0.7,2.9)(4,0.4)(6,1.5)(9.5,0.4)(10,0.333)(10.01,0.331)(10.041,0.3)};
  \draw[thick] plot[smooth] coordinates {(10.041,-0.3)(10.01,-0.331)(10,-0.333)(9.5,-0.4)(6,-1.5)(4,-0.4)(0.7,-2.9)(0.15,-4.7)};
  \draw[thick,fill=lightgray] (0,0) ellipse (0.4 and 5);
  \draw[thick] (0,0) ellipse (0.1 and 0.333);
  \draw[thick,fill=gray!25] (10.041,0) ellipse (0.1 and 0.333);
  \draw[thick] plot[smooth] coordinates {(0,2)(2,1.1)(4,-0.1)(6,-0.7)(9,0.25)(10,0.15)};
  \draw[thick,->] (-2,0)--(12,0) node[right,above]{\normalsize$t$};
  \draw[thick,dashed](0,0.333)--(10,0.333);
  \draw[thick,dashed](0,-0.333)--(10,-0.333);
  \node [black] at (0,2) {\textbullet};
  \draw[->,thick](4,-3)node[right]{\normalsize$\lambda$}--(2.5,-0.4);
  \draw[->,thick](3,3)node[right]{\normalsize$(0,e(0))$}--(0.07,2.07);
  \draw[->,thick](9,3)node[right]{\normalsize$\varphi(t)^{-1}$}--(7,1.4);
\end{tikzpicture}
\end{center}
\caption{Error evolution in a funnel $\cF^1_{\varphi}$ with boundary $\varphi(t)^{-1}$ for $t > 0$.}
\label{Fig:funnel}
\end{figure}
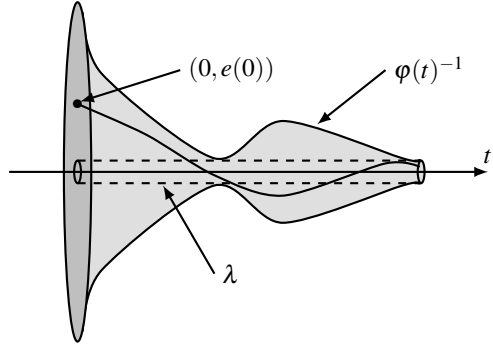
Since every $\varphi\in\Phi_k$ is bounded, the boundary of the associated performance funnel $\cF^m_{\varphi}$ is
bounded away from zero, which means that there exists $\lambda>0$ with $1/\varphi(t)\geq\lambda$ for all $t>0$. Further note that the funnel boundary is not necessarily monotonically decreasing, but it might be beneficial to choose a wider funnel over some later time interval, for instance in the presence of periodic disturbance or when the reference signal varias strongly. Various different funnel boundaries are possible, see e.g.~\cite[Sec.~3.2]{Ilch13}.

\subsection{Controller design}\label{sect:DAE.nonl.funn}

\index{funnel!controller}
The funnel controller for systems of the form \eqref{syst:DAE.nonl} satisfying Assumption~\ref{def:DAE.nonl.syst.class} is of the following form:
\begin{equation}\label{DAE.nonl.funn.controller}
\boxed{
\begin{array}{l}
\mbox {For } i=1,\dots,q:\\
\begin{array}{rcl}
e_{i0}(t)&=&e_i(t)= y_i(t)-y_{\rmref,i}(t),\\
e_{i1}(t)&=&\dot{e}_{i0}(t)+k_{i0}(t)e_{i0}(t),\\
e_{i2}(t)&=&\dot{e}_{i1}(t)+k_{i1}(t)e_{i1}(t),\\
&\vdots& \\
e_{i,r_i-1}(t)&=&\dot{e}_{i,r_i-2}(t)+k_{i,r_i-2}(t)e_{i,r_i-2}(t),\\[2mm]
k_{ij}(t)&=&\tfrac{1}{1-\varphi_{ij}^2(t)|e_{ij}(t)|^2},\quad j=0,\dots,r_i-2.\\[4mm]
\end{array}\\
\mbox {For } i=q+1,\dots,m:\ \  e_i(t)= y_i(t)-y_{\rmref,i}(t),\\[2mm]
\begin{aligned}
e_{I}(t)=&(e_{1,r_1-1}(t),\dots,e_{q,r_q-1}(t))^{\top},& e_{II}(t)=&(e_{q+1}(t),\dots,e_m(t))^{\top},\\
k_{I}(t)=&\tfrac{1}{1-\varphi_{I}(t)^2\|e_{I}(t)\|^2},& k_{II}(t)=&\tfrac{\hat{k}}{1-\varphi_{II}(t)^2\|e_{II}(t)\|^2},\\
\end{aligned}\\[5mm]
u(t)=\begin{pmatrix}
u_I(t)\\
u_{II}(t)
\end{pmatrix}=
\begin{pmatrix}
-k_{I}(t)e_{I}(t)\\
-k_{II}(t)e_{II}(t)
\end{pmatrix},
\end{array}
}
\end{equation}
where we impose the following conditions on the reference signal and funnel functions:
\begin{equation}\label{DAE.line.funn.controller.ass}
\boxed{
\begin{aligned}
y_{\rmref}&=(y_{\rmref,1},\dots,y_{\rmref,m})^\top, \quad y_{\rmref,i}\in \cW^{r_i,\infty}(\R_{\ge 0}\rightarrow \R),\ i=1,\ldots,q\\
&\qquad\qquad\qquad\qquad y_{\rmref,i}\in \cW^{1,\infty}(\R_{\ge 0}\rightarrow \R),\ i=q+1,\ldots,m\\
\varphi_I,\varphi_{II}&\in \Phi_{1},\ \varphi_{ij}\in \Phi_{r_i-j},\ i=1,\dots,q,\ j=0,\ldots,r_i-2.
\end{aligned}
}
\end{equation}
We further assume that $\hat{k}$ satisfies
\begin{equation}\label{DAE.nonl.gain.condition}
\hat{k}>\alpha^{-1}\sup\limits_{Y\in \R^{\overline{r}+m-q}}\left\|f'_2(Y)\begin{bmatrix}
0\\
I_{m-q}
\end{bmatrix}\right\|.
\end{equation}

\newpage

\begin{remark}\label{Rem:funnel-control}\
\begin{enumerate}[a)]
\item By a solution of the closed-loop system\index{closed-loop}\index{system!closed-loop} \eqref{syst:DAE.nonl}, \eqref{DAE.nonl.funn.controller} on $[-h,\omega)$, $\omega\in(0,\infty]$, with initial data $y^0$ as in~\eqref{syst:DAE.nonl-IC} we mean a function $y=(y_1,\dots,y_m)^\top$ such that $y|_{[-h,0]}=y^0$, $y_i\in \cC^{r_i-1}([-h,\omega)\rightarrow \R)$ and $y_i^{(r_i-1)}|_{[0,\omega)}$ is weakly differentiable for $i=1,\dots,q$,  $y_i\in \cC([-h,\omega]\rightarrow \R)$ and $y_i|_{[0,\omega)}$ is weakly differentiable for $i=q+1,\dots, m$, and~$y$ satisfies the differential-algebraic equation in~\eqref{syst:DAE.nonl} with $u$ defined in~\eqref{DAE.nonl.funn.controller} in the weak sense. The solution $y$ is called {\it maximal}, if it has no right extension that is also a solution, and {\em global}, if $\omega=\infty$.\index{solution!maximal}\index{solution!global}

\item Assumption~\ref{def:DAE.nonl.syst.class}~(iii) together with condition~\eqref{DAE.nonl.gain.condition} are essential for the solvability of the closed-loop system \eqref{syst:DAE.nonl}, \eqref{DAE.nonl.funn.controller}, since they guarantee the invertibility of $\alpha\hat{k}I_{m-q}-f'_3(Y)\left[\begin{smallmatrix}
0\\
I_{m-q}
\end{smallmatrix}\right]$. This property is crucial for the explicit solution of the algebraic constraint\index{algebraic constraint} in the closed-loop system~\eqref{syst:DAE.nonl},~\eqref{DAE.nonl.funn.controller}.

\item If the system \eqref{syst:DAE.nonl} has strict relative, i.e., $q=m$ and $r_1=\ldots=r_m=:r>0$, then it satisfies the assumptions of~\cite[Thm.~3.1]{BergLe18a}. In this case, the funnel controller~\eqref{DAE.nonl.funn.controller} simplifies to
\[\boxed{
\begin{array}{l}
\mbox {For } i=1,\dots,m,\\
\begin{array}{rcl}
e_{i0}(t)&=&e_i(t)=y_i(t)-y_{\rmref,i}(t),\\
e_{i1}(t)&=&\dot{e}_{i0}(t)+k_{i0}(t)e_{i0}(t),\\
e_{i2}(t)&=&\dot{e}_{i1}(t)+k_{i1}(t)e_{i1}(t),\\
&\vdots&  \\
e_{i,r-1}(t)&=&\dot{e}_{i,r-2}(t)+k_{i,r-2}(t)e_{i,r-2}(t),\\[1mm]
k_{ij}(t)&=&\tfrac{1}{1-\varphi_{ij}^2(t)|e_{ij}(t)|^2},\ \ j=0,\dots,r-2, \\
e_{r-1}(t)&=& (e_{1,r-1}(t),\ldots,e_{m,r-1}(t))^\top\\
k_{r-1}(t)&=&\tfrac{1}{1-\varphi_{r-1}(t)^2\|e_{r-1}(t)\|^2},\\
u(t)&=&-k_{r-1}(t)e_{r-1}(t).
\end{array}
\end{array}
}
\]
This controller slightly differs from the one presented in~\cite{BergLe18a} for systems with strict relative degree (even when we choose $\varphi_{ij} = \varphi_{1j}$ for all $i=1,\ldots,m$), which reads
\begin{equation}\label{ODE.stri.funn.controller}
\boxed{\begin{array}{l}
\begin{array}{rcl}
e_0(t)&=&e(t) = y(t)-y_{\rmref}(t),\\
e_1(t)&=&\dot{e}_0(t)+k_0(t)e_0(t),\\
e_2(t)&=&\dot{e}_1(t)+k_1(t)e_1(t),\\
&\vdots&  \\
e_{r-1}(t)&=&\dot{e}_{r-2}(t)k_{r-2}(t)e_{r-2}(t),\\[2mm]
k_i(t)&=&\tfrac{1}{1-\varphi_i(t)^2\|e_i(t)\|^2},\ \ i=0,\dots,r-1, \\
u(t)&=&
-k_{r-1}(t)e_{r-1}(t).
\end{array}
\end{array}
}
\end{equation}

\item If the system \eqref{syst:DAE.nonl} satisfies $q=0$, then the funnel controller~\eqref{DAE.nonl.funn.controller}  simplifies to
$$
\begin{aligned}
e(t)&=y(t)-y_{\rmref}(t),\qquad k(t)=\tfrac{\hat{k}}{1-\varphi(t)^2\|e(t)\|^2},\\
u(t)&=-k(t)e(t),
\end{aligned}
$$
and feasibility follows from the results in \cite{BergIlch14} where funnel control for this type has been considered.

\item\label{Rem:funnel-control-e}
Let us stress again that a linear system of the form~\eqref{DAE.line.norm.form} does not completely belong to the class~\eqref{syst:DAE.nonl} as the fourth equation in~\eqref{DAE.line.norm.form} is not included. However, we like to emphasize that in
$$
x_3(t)=\sum\limits_{i=0}^{\nu-1}N^{i}E_{32}y^{(i+1)}(t),
$$
the output $y$ is required smooth enough for $x_3$ to be well defined. Nevertheless, the funnel controller~\eqref{DAE.nonl.funn.controller} can also be applied to systems of the form~\eqref{DAE.line.norm.form}. To see this, assume that there exists a solution to~\eqref{DAE.nonl.funn.controller} applied to~\eqref{DAE.line.norm.form} except for the fourth equation. If the funnel functions $\varphi_I, \varphi_{II}$ and $\varphi_{ij}$, $i=1,\ldots,q$, $j=0,\ldots,r_i-2$ are additionally in $C^{\nu+1}(\R_{\ge 0}\to\R)$ and $y_{\rm ref}$ is additionally in $W^{\nu+2,\infty}(\R_{\ge 0}\to\R^m)$, then the solution $y|_{[0,\infty)}$ will be at least in $C^{\nu+1}(\R_{\ge 0}\to\R^m)$, so that $x_3$ is well defined and continuously differentiable. The proof of this statement is similar to Step~2 of the proof of~\cite[Thm.~5.3]{Berg16b}. Furthermore, using $y_{\rm ref}\in W^{\nu+2,\infty}(\R_{\ge 0}\to\R^m)$ also yields boundedness of~$x_3$, cf.\ Step~4 of the proof of~\cite[Thm.~5.3]{Berg16b}.
%
\end{enumerate}
\end{remark}

\begin{remark}\label{Rem:cons-init-val}
Consider a system~\eqref{syst:DAE.nonl} which satisfies Assumption~\ref{def:DAE.nonl.syst.class} and let the reference signal and funnel functions be as in~\eqref{DAE.line.funn.controller.ass}. Since the second equation in~\eqref{syst:DAE.nonl} is an algebraic equation we need to guarantee that it is initially satisfied for a solution to exist. Since $T_2\in\T_{m,k,h}^{\rm DAE}$ is causal it ``localizes'', in a natural way, to an operator $\hat T_2: C([-h,\omega]\to\R^n)\to C^1([0,\omega]\to\R^k)$, cf.~\cite[Rem.~2.2]{IlchRyan09}. With some abuse of notation, we will henceforth not distinguish between~$T_2$ and its ``localization'' $\hat T_2$. Note that for $\omega=0$ we have that $\hat T_2: C([-h,0]\to\R^n)\to \R^k$. Hence, an initial value $y^0$ as in~\eqref{syst:DAE.nonl-IC} is called {\it consistent}\index{initial value}\index{initial value!consistent} for the closed loop system~\eqref{syst:DAE.nonl},~\eqref{DAE.nonl.funn.controller}, if
\begin{equation}\label{eq:consistent-IV}
\begin{aligned}
&f_2\left(y_1^0(0),\dots,\big(\ddts\big)^{r_1-1}(y_1^0)(0),\dots, \big(\ddts\big)^{r_q-1}(y_q^0)(0),y_{q+1}^0(0),\ldots,y_m^0(0)\right)\\
&+f_3\big(d_3(0),T_2(y^0)\big)+\Gamma_{II}\big(d_4(0),T_2(y^0)\big)u_I(0)+f_4\big(d_5(0),T_2(y^0)\big)u_{II}(0)=0,
\end{aligned}
\end{equation}
where $u_I(0), u_{II}(0)$ are defined by \eqref{DAE.nonl.funn.controller}.
\end{remark}

\subsection{Feasibility of funnel control}\label{sect:feasible.funnel}

We show feasibility of the funnel controller \eqref{DAE.nonl.funn.controller} for systems of the form~\eqref{syst:DAE.nonl} satisfying Assumption~\ref{def:DAE.nonl.syst.class}. The following theorem unifies and extends the funnel control results from~~\cite{Berg16b,BergIlch12a,BergIlch12b,BergIlch14,BergLe18a}, which are all special cases of it.

\begin{theorem}\label{Thm:DAE.nonl.funn.controller}
Consider a~system \eqref{syst:DAE.nonl} satisfying Assumption~\ref{def:DAE.nonl.syst.class}. Let $y_{\rmref}$ and $\varphi_I, \varphi_{II}, \varphi_{ij}$, $i=1,\dots,q$, $j=0,\dots,r_i-2$ be as in~\eqref{DAE.line.funn.controller.ass} and $\hat k>0$ such that~\eqref{DAE.nonl.gain.condition} holds. Then for any consistent initial value $y^0$ as in~\eqref{syst:DAE.nonl-IC} (i.e., $y^0$ satisfies~\eqref{eq:consistent-IV}) such that $e_{I},e_{II},e_{ij}$, $i=1,\dots,q$, $j=0,\dots,r_i-2$ defined in~\eqref{DAE.nonl.funn.controller} satisfy
\begin{equation}\label{DAE.nonl.init.condition}
\begin{aligned}
\varphi_{I}(0)\|e_I(0)\|\,&<1,\quad \varphi_{II}(0)\|e_{II}(0)\|\,<1,\\
\varphi_{ij}(0)|e_{ij}(0)|\,&<1,\quad i=1,\dots,q,\ j=0,\dots,r_i-2,
\end{aligned}\end{equation}
the application of the funnel controller \eqref{DAE.nonl.funn.controller} to \eqref{syst:DAE.nonl} yields a closed-loop initial value problem that has a solution and every solution can be extended to a global solution. Furthermore, for every global solution $y(\cdot)$,
\begin{enumerate}
\item the input $u:\R_{\ge0}\to\R^m$ and the gain functions $k_{I},k_{II}, k_{ij}:\R_{\ge0}\to\R$, $i=1,\dots,q$, $j=0,\dots,r_i-2$ are bounded;
\item the functions $e_{I}:\R_{\ge 0}\to\R^q$, $e_{II}:\R_{\ge 0}\to\R^{m-q}$ and $e_{ij}:\R_{\ge 0}\to\R$, $i=1,\dots,q$, $j=0,\dots,r_i-2$ evolve in their respective performance funnels, i.e.,  for all $i=1,\dots,q$, $j=0,\dots,r_i-2$ and $t\ge 0$ we have
$$
(t,e_{I}(t))\in\cF^q_{\varphi_{I}},\ (t,e_{II}(t))\in\cF^{m-q}_{\varphi_{II}},\ (t,e_{ij}(t))\in\cF^1_{\varphi_{ij}}.
$$
Furthermore, the signals $e_{I}(\cdot),e_{II}(\cdot),e_{ij}(\cdot)$ are uniformly bounded away from the funnel boundaries in the following sense:
\begin{equation}\label{DAE.nonl.bounded.away.error}
\begin{aligned}
&\exists\, \varepsilon_{I}>0\;  \forall\, t>0\,:&\|e_{I}(t)\| & \le \varphi_{I}(t)^{-1} - \varepsilon_{I},\\
&\exists\, \varepsilon_{II}>0\;  \forall\, t>0\,:&\|e_{II}(t)\| & \le \varphi_{II}(t)^{-1} - \varepsilon_{II},\\
&\forall\, i=1,\dots,q,\ j=0,\dots,r_i-2\; \exists\, \varepsilon_{ij}>0\  \forall\, t>0:\!\!\!\!\!\!\!\!&|e_{ij}(t)| & \le \varphi_{ij}(t)^{-1} - \varepsilon_{ij}.
\end{aligned}
\end{equation}
In particular, each error component $e_i(t)= y_i-y_{\rmref,i}(t)$ evolves in the funnel $\cF^1_{\varphi_{i0}}$, for $i=1,\dots,q$, or $\cF^1_{\varphi_{II}}$, for $i=q+1,\dots,m$, resp., and stays uniformly away from its boundary.
\end{enumerate}
\end{theorem}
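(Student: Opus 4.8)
The plan is to follow the by-now standard three-stage scheme for funnel-control feasibility proofs (as in \cite{BergLe18a,Berg16b}), adapted to the present differential-algebraic structure.

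\textbf{Stage~1 (local existence).} First I would rewrite the closed loop~\eqref{syst:DAE.nonl},~\eqref{DAE.nonl.funn.controller} as an initial-value problem for a functional differential equation $\dot z = F(t,z,Tz)$ posed on a relatively open set $\cD$ encoding the funnel constraints~\eqref{DAE.nonl.init.condition}. The state $z$ collects the backstepping errors $e_{i0},\dots,e_{i,r_i-2}$ (equivalently $y_i,\dot y_i,\dots,y_i^{(r_i-1)}$) for $i=1,\dots,q$, while $T$ is assembled from $T_1,T_2$. The only delicate point is the algebraic second equation of~\eqref{syst:DAE.nonl}: inserting $u_{II}=-k_{II}e_{II}$ turns it into an implicit equation for $y_{II}=(y_{q+1},\dots,y_m)^\top$ whose Jacobian in $y_{II}$ equals, up to bounded terms, $-f_4 k_{II} I_{m-q}+f_2'\begin{smallbmatrix}0\\I_{m-q}\end{smallbmatrix}$, which is negative definite and hence invertible precisely by the gain condition~\eqref{DAE.nonl.gain.condition} together with Assumption~\ref{def:DAE.nonl.syst.class}~(iii),(iv) (cf.\ Remark~\ref{Rem:funnel-control}). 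The implicit function theorem resolves $y_{II}$, so $F$ is well defined, measurable in $t$ and continuous in the remaining arguments, and $T$ is causal and locally Lipschitz. A general existence result for such systems (e.g.\ \cite{IlchRyan09}) then yields a maximal solution on $[0,\omega)$, and it remains to show $\omega=\infty$; by the extension criterion it suffices to confine the graph of the solution to a compact subset of $\cD$, that is, to bound every error uniformly away from its funnel boundary.

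\textbf{Stage~2 (inner cascade).} Assume $\omega<\infty$ for contradiction. Since each $\varphi\in\Phi_k$ is bounded below on $[\delta,\omega]$ and the signals are continuous near $0$, every error that remains in its open funnel is automatically bounded on $[0,\omega)$; hence $y$ and all lower-order derivatives feeding $T_1,T_2$ are bounded, so by property~(iii) in Definition~\ref{def:DAE.nonl.oper.T.clas} the quantities $(T_1\,\cdot\,)$, $(T_2\,\cdot\,)$ are bounded and, by continuity, $f_1,f_3,\Gamma_I,\Gamma_{II},f_4$ take values in a compact set along the solution. The core is a per-layer funnel lemma: for fixed $i$ and $j\le r_i-2$ the controller gives the exact identity $\dot e_{ij}=e_{i,j+1}-k_{ij}e_{ij}$, and differentiating $\varphi_{ij}^2 e_{ij}^2$ while using that $e_{i,j+1}$ and $\dot\varphi_{ij}$ are bounded whereas $k_{ij}\to\infty$ as $\varphi_{ij}|e_{ij}|\to1$, the feedback term $-k_{ij}\varphi_{ij}^2 e_{ij}^2$ dominates and forces $\ddts(\varphi_{ij}|e_{ij}|)^2<0$ near the boundary. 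This produces $\varepsilon_{ij}>0$ with $\varphi_{ij}|e_{ij}|\le 1-\varepsilon_{ij}$, whence $k_{ij}$ and $\dot e_{ij}$ are bounded, and the smoothness $\varphi_{ij}\in\Phi_{r_i-j}$ supplies enough bounded derivatives of $k_{ij}$ for the next stage. Because $e_{i,j+1}$ is bounded by its own funnel for every $j$, the lemma applies to all inner layers simultaneously.

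\textbf{Stage~3 (outer layers and the input).} Unwinding the recursion $e_{i,\ell+1}=\dot e_{i\ell}+k_{i\ell}e_{i\ell}$ expresses $\dot e_{i,r_i-1}$ as $y_i^{(r_i)}-y_{\rmref,i}^{(r_i)}$ plus a polynomial in the already-bounded quantities $e_{i\ell},k_{i\ell},\dot k_{i\ell},\ldots$ and derivatives of $\varphi_{i\ell}$; inserting the first equation of~\eqref{syst:DAE.nonl} and $u_I=-k_I e_I$ gives $\dot e_I=g_I-k_I\Gamma_I(d_2,T_1(\cdot))e_I$ with $g_I$ bounded. Since $\Gamma_I+\Gamma_I^\top>0$ on the compact range of its arguments, one has $e_I^\top\Gamma_I e_I\ge\gamma\|e_I\|^2$ for some $\gamma>0$, so $\tfrac12\ddts\|e_I\|^2\le\|e_I\|\,\|g_I\|-\gamma k_I\|e_I\|^2$, which is negative as $\varphi_I\|e_I\|\to1$; this yields $\varepsilon_I$ and boundedness of $k_I$ and $u_I$. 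For $e_{II}$ I would use the algebraic equation directly: solving it for $f_4 u_{II}$ and bounding the right-hand side (all its terms, including $u_I$, are now known to be bounded) together with $f_4\ge\alpha$ gives $\|u_{II}\|\le C/\alpha$, i.e.\ $k_{II}\|e_{II}\|$ is bounded; as $k_{II}$ blows up while $\|e_{II}\|$ stays bounded below when $\varphi_{II}\|e_{II}\|\to1$, this forces $\varphi_{II}\|e_{II}\|\le 1-\varepsilon_{II}$, so $k_{II}$ and $u_{II}$ are bounded as well.

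\textbf{Stage~4 (conclusion).} Combining Stages~2--3, all error signals are uniformly bounded away from their funnel boundaries, which is exactly~\eqref{DAE.nonl.bounded.away.error}, while $u$, all gains, and all $\dot e_{ij}$ are bounded on $[0,\omega)$; hence $z$ together with $\dot z$ is bounded and its graph remains in a compact subset $\cK\subset\cD$. This contradicts the extension criterion for a maximal solution with $\omega<\infty$, so $\omega=\infty$ and the bounds are global, establishing assertions~(1) and~(2). I expect the main obstacle to be Stage~1 --- the rigorous resolution of the algebraic constraint and the verification that the resulting right-hand side and operator meet the hypotheses of the functional-differential-equation existence theorem --- rather than the funnel estimates themselves, which adapt the established technique; a secondary technical point is the careful unwinding of the backstepping derivatives needed to certify that $g_I$ in Stage~3 is indeed bounded.
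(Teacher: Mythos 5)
Your Stages 2--4 essentially reproduce the paper's Steps 2 and 3: the inner-loop funnel estimates and derivative bounds (Steps 2a--2b, the first of which the paper itself delegates to \cite{BergLe18a}), the $e_I$-estimate via a uniform bound $e_I^\top(\Gamma_I+\Gamma_I^\top)e_I\ge\gamma\|e_I\|^2$ on a compact set (Step 2c), the $e_{II}$-estimate from the algebraic equation together with $f_4\ge\alpha$ (Step 2d), and the maximality contradiction (Step 3). Your variant for $k_{II}$ --- solving the constraint for $f_4u_{II}$, concluding $\|u_{II}\|=k_{II}\|e_{II}\|\le C/\alpha$, and noting that $k_{II}\|e_{II}\|$ necessarily blows up as $\varphi_{II}\|e_{II}\|\to1$ --- is a correct and arguably cleaner replacement for the paper's sequence-based contradiction. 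The genuine gap is Stage 1, precisely the step you yourself flag as the main obstacle.

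The algebraic equation in \eqref{syst:DAE.nonl} is \emph{not} a pointwise equation in $y_{II}(t)=X_{II}(t)$: through $(T_2y)(t)$ it depends on the entire history $X_{II}|_{[-h,t]}$ of the unknown, since $y$ is built from both $X_I$ and $X_{II}$. Hence the finite-dimensional implicit function theorem cannot ``resolve $y_{II}$''; one would have to solve a causal operator equation in function space, and properties (i)--(iii) of Definition~\ref{def:DAE.nonl.oper.T.clas} provide no contraction for this (the local Lipschitz constant of $T_2$ does not shrink with the length of the time interval --- think of a memoryless operator). Tellingly, your proposal never invokes property (iv) of Definition~\ref{def:DAE.nonl.oper.T.clas}, the very property that distinguishes $\T^{\rm DAE}_{m,k,h}$ from $\T_{m,k,h}$ and is the reason this class is introduced. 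The paper's Step 1e rests exactly on it: the constraint is \emph{differentiated}, $\ddts(T_2\zeta)(t)$ is replaced by $z\big(\zeta(t),(\tilde T_2\zeta)(t)\big)$, and the differentiated equation is solved for $\dot X_{II}$, which only requires pointwise invertibility of the matrix $\partial F_{II}/\partial X_{II}$ in \eqref{eq:DAE.nonl.cM.def}; that invertibility follows from \eqref{DAE.nonl.gain.condition}, $f_4\ge\alpha$, and $\|(I_{m-q}+\cG)^{-1}\|\le1$ (\cite[Lem.~3.3]{BergIlch14}). The resulting functional ODE in $(X_I,X_{II})$ is covered by \cite[Thm.~B.1]{IlchRyan09}, and the consistency condition \eqref{eq:consistent-IV} is then used in Step 1f to show that $F_{II}$ is constant, hence zero, along solutions of the differentiated system --- a point your proposal also leaves unaddressed (in your elimination approach one would additionally have to verify that the resolvent operator mapping $X_I$ to $X_{II}$ inherits causality, local Lipschitz continuity and the bounded-input--bounded-output property required by the existence theorem). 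A minor, repairable inaccuracy in the same step: the Jacobian with respect to $X_{II}$ is not $-f_4k_{II}I_{m-q}+f_2'\left[\begin{smallmatrix}0\\ I_{m-q}\end{smallmatrix}\right]$ ``up to bounded terms''; it contains the \emph{unbounded} rank-one term $-f_4k_{II}\,\cG$ coming from differentiating $k_{II}$, which however is negative semidefinite and therefore only reinforces invertibility.
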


The proof of this theorem is similar to the one of \cite[Thm.~3.1]{BergLe18a}, where the feasibility of the funnel controller~\eqref{ODE.stri.funn.controller} for ODE systems with strict relative degree has been treated. However, one of the additional difficulties in proving this theorem is that the closed-loop system~\eqref{syst:DAE.nonl}, \eqref{DAE.nonl.funn.controller} is now a DAE because of the second equation in~\eqref{syst:DAE.nonl}.

\begin{proof}
We proceed in several steps.\\
{\em Step 1}: We show that a maximal solution $y:[-h,\omega)\to \R^m$, $\omega\in (0,\infty]$, of the closed-loop system \eqref{syst:DAE.nonl}, \eqref{DAE.nonl.funn.controller} exists. To this end, we seek to reformulate~\eqref{syst:DAE.nonl},~\eqref{DAE.nonl.funn.controller} as an initial value problem of the form
\begin{equation}\label{syst:DAE.nonl.functional.closed.loop}
\begin{aligned}
\dot{X}_I(t)&=F_{I}\left(t,\begin{pmatrix}
X_I(t)\\
X_{II}(t)
\end{pmatrix},T_1\begin{pmatrix}
X_I\\
X_{II}
\end{pmatrix}(t) \right),\\
0&=F_{II}\left(t,\begin{pmatrix}
X_I(t)\\
X_{II}(t)
\end{pmatrix},\hat T_2\begin{pmatrix}
X_I\\
X_{II}
\end{pmatrix}(t) \right)
\end{aligned}
\end{equation}
with
\begin{equation}\label{syst:DAE.nonl.functional.closed.loop-IC}
\begin{aligned}
X_I|_{[-h,0]}&=\left(y_1^0,\dots,\big(\ddts\big)^{r_1-1}y_1^0,\dots, \big(\ddts\big)^{r_q-1}y_q^0\right)^\top,\\
X_{II}|_{[-h,0]}&= \big(y_{q+1}^0,\ldots,y_m^0\big)^\top.
\end{aligned}
\end{equation}
{\em Step 1a}: Define, for $i=1,\dots,q$, and $j=0,\dots,r_i-2$, the sets
$$
\cD_{ij}:=\setdef{(t,e_{i0},\dots,e_{ij})\in \R_{\ge 0}\times\R\times\dots\times\R}{(t,e_{i\ell})\in \cF^1_{\varphi_{i\ell}},\ell=0,\ldots,j},
$$
where $\cF^1_{\varphi_{i\ell}}$ is as in \eqref{eq:perf_funnel}, and the functions $K_{ij}:\cD_{ij}\to\R$ recursively by
\begin{align*}
K_{i0}(t,e_{i0})&:= \tfrac{e_{i0}}{1-\varphi_{i0}^2(t)|e_{i0}|^2},\\
K_{ij}(t,e_{i0},\ldots,e_{ij})&:= \tfrac{e_{ij}}{1-\varphi_{ij}^2(t)|e_{ij}|^2} + \tfrac{\partial K_{i,j-1}}{\partial t}(t,e_{i0},\ldots,e_{i,j-1})\\
  &\quad + \sum_{\ell=0}^{j-1} \tfrac{\partial K_{i,j-1}}{\partial e_{\ell j}}(t,e_{i0},\ldots,e_{i,j-1}) \left( e_{i,\ell+1} - \tfrac{e_{i\ell}}{1-\varphi_{i\ell}^2(t)|e_{i\ell}|^2}\right).
\end{align*}
Now recall that $\overline{r} = r_1 + \ldots + r_q$ and set
\begin{align*}
\cD_{I}&:=\left\{(t,e_{10},\dots,e_{1,r_1-1},\dots,e_{q,r_q-1})\in \R_{\ge 0}\times\R^{\overline{r}}\;\right|\\
&\qquad\left.\phantom{\R^{\overline{r}}}\forall\,i=1,\ldots,q:\ \big(t,e_{i0},\dots,e_{i,r_i-2}\big)\in \mathcal{D}_{i,r_i-2}\ \wedge\ (t,e_{1,r_1-1},\ldots,e_{q,r_q-1})\in \cF^q_{\varphi_{I}}\right\},\\
\cD_{II}&:= \cF^{m-q}_{\varphi_{II}},\\
\cD&:=\setdef{(t,e_I,e_{II})\in\R_{\ge 0}\times\R^{\overline{r}}\times\R^{m-q}}{(t,e_I)\in \cD_I\ \wedge\ (t,e_{II})\in \cD_{II}}.
\end{align*}
Choose some interval $I\subseteq\R_{\ge0}$ with $0\in I$ and let \[(e_{10},\dots,e_{1,r_1-1},\dots,e_{q,r_q-1}):I\to\R^{\overline{r}}\]
be sufficiently smooth such that for all $t\in I$ we have
\begin{align*}
\big(t,e_{10}(t),\dots,e_{1,r_1-1}(t),\dots,e_{q,r_q-1}(t)\big)&\in \cD_{I},\\
(t,e_{q+1}(t),\dots,e_m(t))&\in \cD_{II}
\end{align*}
 and $(e_{i0},\ldots,e_{i,r_i-1})$, $i=1,\dots,q$, satisfies the relations in \eqref{DAE.nonl.funn.controller}. Then $e_i=e_{i0}$ satisfies, on the interval $I$,
\begin{equation}\label{eq:DAE.nonl.eiqrel}
e_i^{(j)}=e_{ij}-\sum\limits_{\ell=0}^{j-1}\left(\ddts\right)^{j-1-\ell}k_{i\ell}e_{i\ell},\quad i=1,\ldots,q,\ j=1,\dots, r_i-1.
\end{equation}
{\em Step 1b:} We show by induction that for all $i=1,\dots,q$, and $j=0,\ldots,r_i-2$ we have
\begin{equation}\label{eq:DAE.nonl.vec.sumji}
    \forall\, t\in I:\ \sum\limits_{\ell=0}^{j}  \left(\ddts\right)^{j-\ell} \Big(k_{i\ell}(t)e_{i\ell}(t)\Big) = K_{ij}\Big(t,e_{i0}(t),\ldots,e_{ij}(t)\Big).
\end{equation}
Fix $t\in I$. Equation \eqref{eq:DAE.nonl.vec.sumji} is obviously true for $j=0$. Assume that $j\in\{1,\ldots,r_i-2\}$ and the statement holds for $j-1$. Then
\begin{align*}
\sum\limits_{\ell=0}^{j}  \left(\ddts\right)^{j-\ell} \Big(k_{i\ell}(t)e_{i\ell}(t)\Big)&= k_{ij}(t)e_{ij}(t) + \ddts \left(\sum\limits_{\ell=0}^{j-1}  \left(\ddts\right)^{j-\ell-1} \Big(k_{i\ell}(t)e_{i\ell}(t)\Big)\right)\\
&= k_{ij}(t)e_{ij}(t) + \ddts K_{i,j-1}\Big(t,e_{i0}(t),\ldots,e_{i,j-1}(t)\Big) \\
&= K_{ij}\Big(t,e_{i0}(t),\ldots,e_{ij}(t)\Big).
\end{align*}
Therefore,~\eqref{eq:DAE.nonl.vec.sumji} is shown and, invoking~\eqref{eq:DAE.nonl.eiqrel}, we have for all $i=1,\ldots,q$ and $t\in I$ that
\begin{equation}\label{eq:DAE.nonl.vec.eiqform}
e_i^{(j)}(t)=e_{ij}(t)-K_{i,j-1}\big(t,e_{i0}(t),\ldots,e_{i,j-1}(t)\big),\quad j=1,\dots, r_i-1.
\end{equation}
{\em Step 1c:} Define, for $i=1,\dots,q$,
$$
\tilde K_{i0} :\R_{\ge 0}\times\R \to \R,\ (t,y_{i0})\mapsto y_{i0}-y_{\rmref,i}(t)
$$
and the set
$$
\tilde \cD_{i0} := \setdef{(t,y_i)\in\R_{\ge 0}\times\R}{\big(t,\tilde K_{i0}(t,y_i)\big)\in\cD_{i0}}.
$$
Furthermore, recursively define the maps
\begin{align*}
\tilde K_{ij}:\tilde \cD_{i,j-1} \times\R &\to \R,\\
(t,y_{i0},\ldots,y_{ij})&\mapsto y_{ij} - y_{\rmref,i}^{(j)}(t)+ K_{i,j-1}\big( t, \tilde K_{i0}(t,y_{i0}), \ldots, \tilde K_{i,j-1}(t,y_{i0},\ldots,y_{i,j-1})\big),
\end{align*}
for $j=1,\ldots,r_i-1$ and the sets
\[
\tilde \cD_{ij}:=\setdef{(t,y_{i0},\dots,y_{ij})\in \tilde\cD_{i,j-1}\times\R}{\big( t, \tilde K_{i0}(t,y_{i0}), \ldots,\tilde K_{ij}(t,y_{i0},\ldots,y_{ij})\big)\in\cD_{ij}}
\]
for $j=1,\ldots,r_i-2$. Then it follows from~\eqref{eq:DAE.nonl.vec.eiqform} and a simple induction that for all $t\in I$, $i=1,\dots,q$, and $j=0,\ldots,r_i-1$ we have
$$
e_{ij}(t)=\tilde{K}_{ij}\big(t,y_i(t),\dots,y_i^{(j)}(t)\big).
$$
Now, define
\begin{align*}
\tilde \cD_{I}&:=\left\{(t,y_{10},\dots,y_{1,r_1-1},\dots,y_{q,r_q-1})\in\R_{\ge 0}\times\R^{\overline{r}}\ \right|\\
&\qquad\quad\forall\,i=1,\ldots,q:\ (t,y_{i0},\dots,y_{i,r_i-1})\in \tilde\cD_{i,r_i-2}\times\R\\
&\qquad\quad\left.\wedge\ \big( t, \tilde K_{1,r_1-1}(t,y_{10},\ldots,y_{1,r_1-1}), \ldots, \tilde K_{q,r_q-1}(t,y_{q0},\ldots,y_{q,r_q-1})\big)\in\cF_{\varphi_I}^q\right\},\\
\tilde \cD_{II}&:=\setdef{(t,y_{q+1},\dots,y_m)\in \R_{\ge 0}\times\R^{m-q}}{(t,y_{q+1}-y_{\rmref,q+1}(t),\dots,y_m-y_{\rmref,m}(t))\in \cD_{II}},
\end{align*}
and the map
\begin{align*}
\tilde K_I:\tilde \cD_{I} \to \R^q,\ & (t,y_{10},\ldots,y_{1,r_1-1},\ldots,y_{q,r_q-1})\\
&\mapsto \left( \tilde K_{1,r_1-1}(t,y_{10},\ldots,y_{1,r_1-1}), \ldots, \tilde K_{q,r_q-1}(t,y_{q0},\ldots,y_{q,r_q-1})\right)^\top,
\end{align*}
then we find that, for all $t\in I$,
\[
    e_I(t):= \big(e_{1,r_1-1}(t),\ldots,e_{q,r_q-1}(t)\big)^\top = \tilde K_I\left(t,y_1(t),\ldots,y_1^{(r_1-1)}(t),\ldots,y_q^{(r_q-1)}(t)\right).
\]
Further  denote, for $t\in I$,
\begin{align*}
X_{I}(t)&= \Big(y_1(t),\dots,y_1^{(r_1-1)}(t)\dots, y_q^{(r_q-1)}(t)\Big)^{\top},&
X_{II}(t)&=(y_{q+1}(t),\dots,y_m(t))^{\top},\\
X_{\rmref,II}(t)&=(y_{\rmref,q+1}(t),\dots,y_{\rmref,m}(t))^{\top},
\end{align*}
then
\begin{align*}
    e_I(t)&=\tilde{K}_{I}(t,X_{I}(t)),\\
e_{II}(t)&:=(y_{q+1}(t)-y_{\rmref,q+1}(t),\dots,y_m(t)-y_{\rmref,m}(t))^{\top} = X_{II}(t)-X_{\rmref,II}(t)
\end{align*}
and the feedback $u$ in \eqref{DAE.nonl.funn.controller} reads
$$
u(t)=\begin{pmatrix}
\frac{-\tilde{K}_{I}(t,X_{I}(t))}{1-\varphi_{I}(t)^2\|\tilde{K}_{I}(t,X_{I}(t))\|^2}\\[3mm]
\frac{-\hat{k} (X_{II}(t)-X_{\rmref,II}(t))}{1-\varphi_{II}(t)^2\|X_{II}(t)-X_{\rmref,II}(t)\|^2}
\end{pmatrix}.
$$
{\em Step 1d}:
Now, we set
\begin{equation}\label{eq:DAE.nonl.S.matrix}
\begin{aligned}
H&=\diag\left((e_1^{[r_1]})^\top,\dots,(e_1^{[r_q]})^\top\right)\in \R^{q\times \overline{r}},\\
S&=\begin{bmatrix}
H&0\\
0&I_{m-q}
\end{bmatrix}\in \R^{m\times (\overline{r}+m-q)},
\end{aligned}
\end{equation}
where $e_1^{[k]}\in\R^{k}$ is the first canonical unit vector.
This construction yields
$$
\forall\, t\in I:\quad S\begin{pmatrix}
X_I(t)\\
X_{II}(t)
\end{pmatrix}=y(t).
$$
We define an operator $\hat T_2:\cC([-h,\infty)\to\R^{\overline{r}+m-q})\to \cC^1(\R_{\ge 0}\to\R^k)$ such that for $\zeta_1\in \cC([-h,\infty)\to\R^{\overline{r}})$, $\zeta_2\in\cC([-h,\infty)\to\R^{m-q})$ we have
$$
\hat T_2\begin{pmatrix}
\zeta_1\\ \zeta_2
\end{pmatrix}(t) := T_2\left(S\begin{pmatrix}
\zeta_1\\ \zeta_2
\end{pmatrix} \right)(t),\quad t\ge 0.
$$
Since $T_2\in\T_{m,k,h}^{\rm DAE}$ we obtain that $\hat T_2\in\T_{\overline{r}+m-q,k,h}^{\rm DAE}$.
Set
$$
\tilde{\cD}:=\setdef{(t,X_I,X_{II})\in\R_{\ge 0}\times\R^{\overline{r}}\times\R^{m-q}}{(t,X_I)\in \tilde{\cD}_{I}\text{ and } (t,X_{II})\in \tilde{\cD}_{II}}.
$$
We rewrite $f_1$, and $\Gamma_I$ from system \eqref{syst:DAE.nonl} in vector form
\[f_1=\begin{pmatrix}
f_1^1\\
\vdots\\
f_1^q
\end{pmatrix},\quad
\Gamma_I=\begin{pmatrix}
\Gamma_I^1\\
\vdots\\
\Gamma_I^q
\end{pmatrix}
\]
with components
$f_1^i\in\cC(\R^s\times\R^k\to \R)$ and
$\Gamma_I^i\in\cC(\R^s\times\R^k\to \R^{1\times q})$ for $i=1,\dots,q$. We now define functions $F_I:\tilde{\cD}\times\R^k \to\R^{\overline{r}}$, $F_{II}:\tilde{\cD}\times\R^k \to\R^{m-q}$ with
\begin{multline*}
F_I:\quad (t,\underset{=X_I}{\underbrace{y_{10},\dots,y_{1,r_1-1},\dots,y_{q,r_q-1}}}, \underset{=X_{II}}{\underbrace{y_{q+1},\dots,y_m}},\eta)\mapsto\\
\Bigg(y_{11},\ldots,y_{1,r_1-1},f_1^1(d_1(t),\eta)- \dfrac{\Gamma_I^1(d_2(t),\eta)\tilde{K}_{I}(t,X_I)}{1-\varphi_{I}(t)^2\|\tilde{K}_{I}(t,X_I)\|^2},\\
\ldots, y_{q,r_q-1}, f_1^q(d_1(t),\eta)-\dfrac{\Gamma_I^q(d_2(t),\eta)\tilde{K}_{I}(t,X_I)}{1-\varphi_{I}(t)^2\|\tilde{K}_{I}(t,X_I)\|^2}\Bigg),
\end{multline*}
\begin{multline*}
F_{II}:\quad (t,\underset{=X_I}{\underbrace{y_{10},\dots,y_{1,r_1-1},\dots,y_{q,r_q-1}}}, \underset{=X_{II}}{\underbrace{y_{q+1},\dots,y_m}},\eta)\mapsto\\
\Bigg(f_2(X_I,X_{II})+f_3(d_3(t),\eta)- \dfrac{\Gamma_{II}(d_4(t),\eta)\tilde{K}_{I}(t,X_I)}{1-\varphi_{I}(t)^2\|\tilde{K}_{I}(t,X_I)\|^2}\\
-f_4(d_5(t),\eta)\dfrac{\hat{k}\, (X_{II}-X_{\rmref,II}(t))}{1-\varphi_{II}(t)^2\|X_{II}-X_{\rmref,II}(t)\|^2}\Bigg).
\end{multline*}
Then the closed-loop system~\eqref{syst:DAE.nonl},~\eqref{DAE.nonl.funn.controller}  is equivalent to~\eqref{syst:DAE.nonl.functional.closed.loop}.\\
{\em Step 1e}: In order to show that~\eqref{syst:DAE.nonl.functional.closed.loop} has a solution we take the derivative of the second equation and rewrite it appropriately. First observe that since $T_2\in\T_{m,k,h}^{\rm DAE}$ there exist $z\in C(\R^m\times\R^k\to\R^k)$ and $\tilde T_2\in \T_{m,k,h}$ such that
\[
    \forall\, \zeta\in \cC([-h,\infty)\to\R^{m})\ \forall\ t\ge 0:\ \ddts (T_2\zeta)(t)=z\big(\zeta(t),(\tilde{T}_2\zeta)(t)).
\]
Now define the operator $\hat T_3:\cC([-h,\infty)\to\R^{\overline{r}+m-q})\to \cC(\R_{\ge 0}\to\R^k)$ by the property that for $\zeta_1\in \cC([-h,\infty)\to\R^{\overline{r}})$, $\zeta_2\in\cC([-h,\infty)\to\R^{m-q})$ we have
$$
\hat T_3\begin{pmatrix}
\zeta_1\\ \zeta_2
\end{pmatrix}(t) := \tilde T_2\left(S\begin{pmatrix}
\zeta_1\\ \zeta_2
\end{pmatrix} \right)(t),\quad t\ge 0,
$$
then $\hat T_3\in \T_{\overline{r}+m-q,k,h}$. A differentiation of the second equation in~\eqref{syst:DAE.nonl.functional.closed.loop} yields
\begin{align*}
    0 &= \frac{\partial F_{II}}{\partial t}\left(t,\begin{pmatrix}
X_I(t)\\
X_{II}(t)
\end{pmatrix},\hat T_2\begin{pmatrix}
X_I\\
X_{II}
\end{pmatrix}(t) \right)
+ \frac{\partial F_{II}}{\partial X_I}\left(t,\begin{pmatrix}
X_I(t)\\
X_{II}(t)
\end{pmatrix},\hat T_2\begin{pmatrix}
X_I\\
X_{II}
\end{pmatrix}(t) \right)\, \dot X_I(t) \\
&\quad + \frac{\partial F_{II}}{\partial X_{II}}\left(t,\begin{pmatrix}
X_I(t)\\
X_{II}(t)
\end{pmatrix},\hat T_2\begin{pmatrix}
X_I\\
X_{II}
\end{pmatrix}(t) \right)\, \dot X_{II}(t)\\
&\quad+ \frac{\partial F_{II}}{\partial \eta}\left(t,\begin{pmatrix}
X_I(t)\\
X_{II}(t)
\end{pmatrix},\hat T_2\begin{pmatrix}
X_I\\
X_{II}
\end{pmatrix}(t) \right)\, \ddt \left(\hat T_2\begin{pmatrix}
X_I\\
X_{II}
\end{pmatrix}\right)(t),
\end{align*}
by which, using the first equation in~\eqref{syst:DAE.nonl.functional.closed.loop} and
\[
    \ddt \left(\hat T_2\begin{pmatrix}
X_I\\
X_{II}
\end{pmatrix}\right)(t) = z\left(S\begin{pmatrix}
X_I(t)\\
X_{II}(t)
\end{pmatrix},\hat T_3\begin{pmatrix}
X_I\\
X_{II}
\end{pmatrix}(t)\right),
\]
we obtain
\begin{multline*}
    \frac{\partial F_{II}}{\partial X_{II}}\left(t,\begin{pmatrix}
X_I(t)\\
X_{II}(t)
\end{pmatrix},\hat T_2\begin{pmatrix}
X_I\\
X_{II}
\end{pmatrix}(t) \right)\, \dot X_{II}(t) \\
= \hat F_{II}\left(t,\begin{pmatrix}
X_I(t)\\
X_{II}(t)
\end{pmatrix},T_1\begin{pmatrix}
X_I\\
X_{II}
\end{pmatrix}(t), \hat T_2\begin{pmatrix}
X_I\\
X_{II}
\end{pmatrix}(t),\hat T_3\begin{pmatrix}
X_I\\
X_{II}
\end{pmatrix}(t)  \right)
\end{multline*}
for some $\hat F_{II}:\tilde\cD\times\R^{3k}\to\R^{m-q}$.
We show that the matrix
\begin{multline}\label{eq:DAE.nonl.cM.def}
\frac{\partial F_{II}}{\partial X_{II}}(t,X_I,X_{II},\eta)=\tfrac{\partial f_2(X_I,X_{II})}{\partial X_{II}}
-\tfrac{\hat{k}f_4\left(d_5(t),\eta\right)}{1-\varphi_{II}(t)^2\|X_{II}-X_{\rmref,II}(t)\|^2}\cdot\\
\cdot\bigg(I_{m-q}+ \tfrac{2\varphi_{II}(t)^2(X_{II}-X_{\rmref,II}(t))(X_{II}-X_{\rmref,II}(t))^{\top}}{1-\varphi_{II}(t)^2\|X_{II}-X_{\rmref,II}(t)\|^2}\bigg)
\end{multline}
is invertible for all $(t,X_I,X_{II},\eta)\in\tilde \cD\times\R^k$:
The symmetry and positive semi-definiteness of
$$
\cG(t,X_{II}):=\tfrac{2\varphi_{II}(t)^2(X_{II}-X_{\rmref,II}(t))(X_{II}-X_{\rmref,II}(t))^{\top}}{1-\varphi_{II}(t)^2\|X_{II}-X_{\rmref,II}(t)\|^2}
$$
implies positive definiteness (and hence invertibility) of $I_{m-q}+\cG(t,X_{II})$ for all $(t,X_{II})\in\tilde \cD_{II}$, and by~\cite[Lem.~3.3]{BergIlch14} we further have
$$
\left\|\big(I_{m-q}+\cG(t,X_{II})\big)^{-1}\right\|\le 1.
$$
Therefore, according to  \eqref{DAE.nonl.gain.condition} and Assumption~\ref{def:DAE.nonl.syst.class}~(iv), we have for all $(t,X_I,X_{II},\eta)\in\tilde \cD\times\R^k$ that
\begin{multline*}
\left\|\big(1-\varphi_{II}(t)^2\|X_{II}-X_{\rmref,II}(t)\|^2\big)\hat{k}^{-1}\Big[f_4\left(d_5(t),\eta\right)\Big]^{-1} (I_{m-q}\!+\!\cG(t,X_{II}))^{-1} \tfrac{\partial f_2(X_I,X_{II})}{\partial X_{II}}\right\|\\
\le \hat{k}^{-1}\alpha^{-1}\!\left\|\tfrac{\partial f_2(X_I,X_{II})}{\partial X_{II}} \right\|\!\stackrel{\eqref{DAE.nonl.gain.condition}}{<}1.
\end{multline*}
This implies invertibility of $\frac{\partial F_{II}}{\partial X_{II}}(t,X_I,X_{II},\eta)$ for all $(t,X_I,X_{II},\eta)\in\tilde \cD\times\R^k$.
With $\tilde{F}_{II}: \tilde{\cD}\times\R^{3k}\to\R^{m-q}$ defined by
\[
    \tilde{F}_{II}(t,X_I,X_{II},\eta_1,\eta_2,\eta_3) := \left(\frac{\partial F_{II}}{\partial X_{II}}(t,X_I,X_{II},\eta_2)\right)^{-1}  \hat{F}_{II}(t,X_I,X_{II},\eta_1,\eta_2,\eta_3)
\]
and the first equation in~\eqref{syst:DAE.nonl.functional.closed.loop} we obtain the ODE
\begin{equation}\label{eq:DAE.nonl.ODE.closed.loop}
\begin{aligned}
\dot{X}_I(t)&=F_{I}\left(t,\begin{pmatrix}
X_I(t)\\
X_{II}(t)
\end{pmatrix},T_1\begin{pmatrix}
X_I\\
X_{II}
\end{pmatrix}(t) \right),\\
\dot{X}_{II}(t)&=\tilde{F}_{II}\left(t,\begin{pmatrix}
X_I(t)\\
X_{II}(t)
\end{pmatrix},T_1\begin{pmatrix}
X_I\\
X_{II}
\end{pmatrix}(t), \hat T_2\begin{pmatrix}
X_I\\
X_{II}
\end{pmatrix}(t),\hat T_3\begin{pmatrix}
X_I\\
X_{II}
\end{pmatrix}(t)  \right),
\end{aligned}
\end{equation}
with initial conditions~\eqref{syst:DAE.nonl.functional.closed.loop-IC}.\\
{\em Step 1f}:
Consider the initial value problem~\eqref{eq:DAE.nonl.ODE.closed.loop},~\eqref{syst:DAE.nonl.functional.closed.loop-IC}, then we have $(0,X_I(0),X_{II}(0))\in \tilde{\cD}$, $F_I$ is measurable in $t$, continuous in $(X_I,X_{II},\eta)$, and locally essentially bounded, and $\tilde{F}_{II}$ is measurable in $t$, continuous in $(X_I,X_{II},\eta_1,\eta_2,\eta_3)$, and locally essentially bounded. Therefore, by~\cite[Theorem B.1]{IlchRyan09}\footnote{In \cite{IlchRyan09} a domain $\cD\subseteq\R_{\ge 0}\times \R$ is considered, but the generalization to the higher dimensional case is straightforward.} we obtain existence of solutions to \eqref{eq:DAE.nonl.ODE.closed.loop}, and every solution can be extended to a maximal solution. Furthermore, for a maximal solution $(X_I,X_{II}):[-h,\omega)\to \R^{\overline{r}+m-q}$, $\omega\in (0,\infty]$, of~\eqref{eq:DAE.nonl.ODE.closed.loop},~\eqref{syst:DAE.nonl.functional.closed.loop-IC} the closure of the graph of this solution is not a compact subset of $\tilde{\cD}$.

We show that $(X_I,X_{II})$ is also a maximal solution of~\eqref{syst:DAE.nonl.functional.closed.loop}. Since $(X_I,X_{II})$ is particular satisfies, by construction,
\[
\forall\, t\in[0,\omega):\ 0 = \ddt F_{II}\left(t,\begin{pmatrix}
X_I(t)\\
X_{II}(t)
\end{pmatrix},\hat T_2\begin{pmatrix}
X_I\\
X_{II}
\end{pmatrix}(t) \right),
\]
there exists $c\in\R^{m-q}$ such that
\[
\forall\, t\in[0,\omega):\ c = F_{II}\left(t,\begin{pmatrix}
X_I(t)\\
X_{II}(t)
\end{pmatrix},\hat T_2\begin{pmatrix}
X_I\\
X_{II}
\end{pmatrix}(t) \right).
\]
Invoking~\eqref{syst:DAE.nonl.functional.closed.loop-IC}, the definition of~$F_{II}$ and $\hat T_2$, and the consistency condition~\eqref{eq:consistent-IV} we may infer that $c=0$. Therefore, $(X_I,X_{II})$ is a solution of~\eqref{syst:DAE.nonl.functional.closed.loop}. Furthermore, $(X_I,X_{II})$ is also a maximal solution of~\eqref{syst:DAE.nonl.functional.closed.loop}, since any right extension would be a solution of~\eqref{eq:DAE.nonl.ODE.closed.loop} following the procedure in Step~1e, a contradiction.


Recall $X_{I}(t)= \Big(y_1(t),\dots,y_1^{(r_1-1)}(t)\dots, y_q^{(r_q-1)}(t)\Big)^{\top}$, $X_{II}(t)=(y_{q+1}(t),\dots,y_m(t))^{\top}$ and define
\begin{equation}(e_{10},\dots,e_{1,r_1-1},\dots,e_{q,r_q-1},e_{q+1},\dots,e_m):[0,\omega)\to \R^{\overline{r}+m-q}
\label{eq:efun}\end{equation}
by
\begin{align*}
e_{ij}(t)&=\tilde{K}_{ij}(t,y_i(t),\dots,y_i^{(j)}(t)),&& \text{for } i=1,\dots,q \text{ and } j=0,\dots, r_i-1,\\
e_i(t)&=y_i(t)-y_{\rmref,i}(t),&& \text{for } i=q+1,\dots,m,
\end{align*}
then the closure of the graph of the function in \eqref{eq:efun}
is not a compact subset of $\cD$.\\
{\em Step 2}: We show boundedness of the gain functions $k_{I}(\cdot)$, $k_{II}(\cdot)$ and $k_{ij}(\cdot)$ as in~\eqref{DAE.nonl.funn.controller} on $[0,\omega)$. This also proves~\eqref{DAE.nonl.bounded.away.error}.\\
{\em Step 2a}: The proof of boundedness of $k_{ij}(\cdot)$ for $i=1,\dots,q$, $j=0,\dots,r_i-2$ on $[0,\omega)$ is analogous to {Step 2a} of the proof of \cite[Thm.~3.1]{BergLe18a} and hence omitted.\\
{\em Step 2b}: We prove by induction that there exist constants ${M}^\ell_{ij},{N}^\ell_{ij},{K}^\ell_{ij}>0$ such that, for all $t\in [0,\omega)$,
\begin{equation}\label{DAE.nonl.der.bound}
\left|\left(\ddts\right)^\ell \left[k_{ij}(t)e_{ij}(t)\right]\right|\le {M}^\ell_{ij},\quad \left|\left(\ddts\right)^\ell e_{ij}(t)\right|\le {N}^\ell_{ij},\quad \left|\left(\ddts\right)^\ell k_{ij}(t)\right|\le {K}^\ell_{ij},
\end{equation}
for $i=1,\dots,q$, $j=0,\dots, r_i-2$, and $\ell=0,\dots, r_i-1-j$.\\
First, we may infer from {\it Step 2a} that $k_{ij}(\cdot)$, for $i=1,\dots,q$, $j=0,\dots, r_i-2$, are bounded. Furthermore, $e_{ij}$ are bounded since they evolve in the respective performance funnels. Therefore, for each $i=1,\dots,q$ and $j=0,\ldots,r_i-2$, \eqref{DAE.nonl.der.bound} is true whenever $\ell=0$. Fix $i\in\{1,\ldots,q\}$. We prove~\eqref{DAE.nonl.der.bound} for $j=r_i-2$ and $\ell=1$:
\begin{align*}
  \dot e_{i,r_i-2}(t) &= e_{i,r_i-1}(t)-k_{i,r_i-2}(t)e_{i,r_i-2}(t),\\
  \dot k_{i,r_i-2}(t) &= 2 k^2_{i,r_i-2}(t)\big(\varphi_{i,r_i-2}^2(t) e_{i,r_i-2}(t) \dot e_{i,r_i-2}(t)\\
  &\quad+ \varphi_{i,r_i-2}(t)\dot{\varphi}_{i,r_i-2}(t) |e_{i,r_i-2}(t)|^2\big),\\
  \ddts \left[k_{i,r_i-2}(t)e_{i,r_i-2}(t)\right]&= \dot{k}_{i,r_i-2}(t) e_{i,r_i-2}(t)+k_{i,r_i-2}(t)\dot{e}_{i,r_i-2}(t).
\end{align*}
Boundedness of $k_{i,r_i-2}$, $\varphi_{i,r_i-2}$, $\dot{\varphi}_{i,r_i-2}$, $e_{i,r_i-2}$ together with the above equations implies that $\dot e_{i,r_i-2}(t)$, $\dot k_{i,r_i-2}(t)$ and $\ddts \left[k_{i,r_i-2}(t)e_{i,r_i-2}(t)\right]$ are bounded. Now consider indices $s\in\{0,\ldots, r_i-3\}$ and $l\in\{0,\ldots,r_i-1-s\}$ and assume that~\eqref{DAE.nonl.der.bound} is true for all $j = s+1,\ldots,r_i-2$ and all $\ell=0,\ldots,r_i-1-j$ as well as for $j=s$ and all $\ell=0,\ldots,l-1$. We show that it is true for $j=s$ and $\ell=l$:
\begin{align*}
  \left(\ddts\right)^l e_{is}(t)&= \left(\ddts\right)^{l-1}\left[e_{i,s+1}(t)-k_{is}(t)e_{is}(t)\right]\\&= \left(\ddts\right)^{l-1} e_{i,s+1}(t) - \left(\ddts\right)^{l-1} \left[k_{is}(t)e_{is}(t)\right],\\
  \left(\ddts\right)^l k_{is}(t)&= \left(\ddts\right)^{l-1}\!\! \Big( 2k^2_{is}(t) \big(\varphi_{is}^2(t) e_{is}(t) \dot{e}_{is}(t)\! + \!\varphi_{is}(t)\dot{\varphi}_{is}(t) |e_{is}(t)|^2\big)\Big),\\
\left(\ddts\right)^l \left[k_{is}(t)e_{is}(t)\right]&= \left(\ddts\right)^{l-1} \big( \dot{k}_{is}(t) e_{is}(t)+k_{is}(t)\dot{e}_{is}(t)\big).
\end{align*}
Then, successive application of the product rule and using the induction hypothesis as wells as the fact that $\varphi_{is},\dot{\varphi}_{is},\ldots,\varphi_{is}^{(r_i-s)}$ are bounded, yields that the above terms are bounded. Therefore, the proof of \eqref{DAE.nonl.der.bound} is complete.\\
It follows from \eqref{DAE.nonl.der.bound} and \eqref{eq:DAE.nonl.eiqrel} that, for all $i=1,\dots,q$ and $j=0,\dots, r_i-1$, $e_i^{(j)}$ is bounded on $[0,\omega)$.\\
{\em Step 2c}: We show that $k_{I}(\cdot)$ as in~\eqref{DAE.nonl.funn.controller} is bounded. It follows from~\eqref{eq:DAE.nonl.eiqrel} that, for $i=1,\dots,q$,
$$
e_i^{(r_i)}(t)=\dot{e}_{i,r_i-1}(t)-\sum\limits_{j=0}^{r_i-2}\left(\ddts\right)^{r_i-1-j} \left[k_{ij}(t)e_{ij}(t)\right].
$$
Then we find that by~\eqref{syst:DAE.nonl.functional.closed.loop}
\begin{equation}\label{eq:DAE.nonl.eI}
\begin{aligned}
\dot{e}_I(t)&=f_1\Big(d_1(t), T_1\big(y_1,\dots,y_1^{(r_1-1)},\ldots,y_q^{(r_q-1)},y_{q+1},\ldots,y_m\big)(t)\Big)\\
&\quad -\Gamma_I\Big(d_1(t), T_1\big(y_1,\dots,y_1^{(r_1-1)},\ldots,y_q^{(r_q-1)},y_{q+1},\ldots,y_m\big)(t)\Big)\, k_I(t)e_I(t)\\
&\quad +\begin{pmatrix}
\sum\limits_{j=0}^{r_1-2}\left(\ddts\right)^{r_1-1-j}k_{1j}(t)e_{1j}(t)\\
\sum\limits_{j=0}^{r_2-2}\left(\ddts\right)^{r_2-1-j}k_{2j}(t)e_{2j}(t)\\
\vdots\\
\sum\limits_{j=0}^{r_q-2}\left(\ddts\right)^{r_q-1-j}k_{qj}(t)e_{qj}(t)\\
\end{pmatrix}
-\begin{pmatrix}
y_{\rmref,1}^{(r_1)}(t)\\
\vdots\\
y_{\rmref,q}^{(r_q)}(t)\\
\end{pmatrix}.
\end{aligned}
\end{equation}
Again we use  $X_{I}(t)= \Big(y_1(t),\dots,y_1^{(r_1-1)}(t)\dots, y_q^{(r_q-1)}(t)\Big)^{\top}$, $X_{II}(t)=(y_{q+1}(t),\dots,y_m(t))^{\top}$ and we set, for $t\in[0,\omega)$,
\begin{equation}\label{eq:DAE.nonl.hatFI}
\begin{aligned}
\hat{F}_I(t)&:=f_1\Big(d_1(t), T_1\big(X_{I},X_{II}\big)(t)\Big)\\
&\quad +\begin{pmatrix}
\sum\limits_{j=0}^{r_1-2}\left(\ddts\right)^{r_1-1-j}k_{1j}(t)e_{1j}(t)\\
\sum\limits_{j=0}^{r_2-2}\left(\ddts\right)^{r_2-1-j}k_{2j}(t)e_{2j}(t)\\
\vdots\\
\sum\limits_{j=0}^{r_q-2}\left(\ddts\right)^{r_q-1-j}k_{qj}(t)e_{qj}(t)\\
\end{pmatrix}
-\begin{pmatrix}
y_{\rmref,1}^{(r_1)}(t)\\
\vdots\\
y_{\rmref,q}^{(r_q)}(t)\\
\end{pmatrix}.
\end{aligned}
\end{equation}
We obtain from \eqref{DAE.nonl.der.bound} and~\eqref{eq:DAE.nonl.eiqrel} that~$e_i^{(j)}$ is bounded on the interval $[0,\omega)$ for $i=1,\ldots,q$ and $j=0,\ldots,r_i-2$. Furthermore, $e_I$ evolves in the performance funnel $\cF_{\varphi_I}^q$, thus $|e_{i,r_i-1}(t)|^2 \le \|e_I(t)\|^2 < \varphi_I(t)^{-1}$ for all $t\in [0,\omega)$, so $e_{i,r_i-1}$ is bounded on $[0,\omega)$ for $i=1,\ldots,q$. Invoking boundedness of $y_{\rmref,i}^{(j)}$ yields boundedness of $y_i^{(j)}$ for $i=1,\ldots,q$, $j=0,\ldots,r_i-1$. Then the bounded-input, bounded-output property of $T_1$ in Definition~\ref{def:DAE.nonl.oper.T.clas}~(iii) implies that $T_1\big(X_{I},X_{II}\big)$ is bounded by
\[
    M_{T_1}:=\|T_1\big(X_{I},X_{II}\big)|_{[0,\omega)}\|_{\infty}.
\]
This property together with~\eqref{DAE.nonl.der.bound}, continuity of $f_1$ and boundedness of $d_1$ yields that $\hat{F}_I(\cdot)$ is bounded on $[0,\omega)$. In other words, there exists some $M_{\hat{F}_I}>0$ such that $\|\hat{F}_I|_{[0,\omega)}\|_\infty\le M_{\hat{F}_I}$. Now define the compact set
$$
\Omega=\setdef{\!(\delta,\eta,e_I)\in \R^s\times\R^k\times\R^q\!}{\!\|\delta\|\le \|d_2|_{[0,\omega)}\|_{\infty},\ \|\eta\|\le M_{T_1},\ \|e_I\|=1\!\!},
$$
then, since $\Gamma_I+\Gamma_I^\top$ is pointwise positive definite by Assumption~\ref{def:DAE.nonl.syst.class}\,(i) and the map
$$
\Omega\ni (\delta,\eta,e_I) \mapsto e^{\top}_I\big(\Gamma_I(\delta,\eta)+\Gamma_I(\delta,\eta)^\top\big)e_I\in \R_{> 0}
$$
is continuous, it follows that there exists $\gamma>0$ such that
$$
\forall\, (\delta,\eta,e_I)\in \Omega:\quad e^{\top}_I\big(\Gamma_I(\delta,\eta)+\Gamma_I(\delta,\eta)^\top\big)e_I\ge \gamma.
$$
Therefore, we have for all $t\in [0,\omega)$ that
$$
 e_I(t)^{\top}\Big(\Gamma_I\Big(d_1(t), T_1\big(X_{I},X_{II}\big)(t)\Big) + \Gamma_I\Big(d_1(t), T_1\big(X_{I},X_{II}\big)(t)\Big)^\top\Big)e_I(t)\ge \gamma\|e_I(t)\|^2.
$$
Now, set $\psi_I(t):=\varphi_I(t)^{-1}$ for $t\in(0,\omega)$, let $T_I\in(0,\omega)$ be arbitrary but fixed and set $\lambda_I := \inf_{t\in(0,\omega)} \psi_I(t)$. Since $\dot \varphi_I$ is bounded and $\liminf_{t\to\infty}\varphi_I(t) >0$ we find that $\ddts \psi_I|_{[0,\omega)}$ is bounded and hence $\psi_I|_{[0,\omega)}$ is Lipschitz continuous with Lipschitz bound $L_I>0$. Choose $\varepsilon_{I}>0$ small enough  such that
\begin{align}
\varepsilon_{I} &\le \min\left\{ \frac{\lambda_{I}}{2}, \inf\limits_{t\in(0,T_{I}]} (\psi_{I}(t) - \|e_{I}(t)\|)\right\}\nonumber\\
\text{and }\quad
\label{eq:DAE.nonl.funn.LI}
    L_{I} &\le \dfrac{\lambda_{I}^2}{8\varepsilon_{I}}\gamma-M_{\hat{F}_I},
\end{align}
We show that
\begin{equation}\label{DAE.nonl.eI.estimate}
\forall\, t\in (0,\omega):\ \psi_{I}(t)-\|e_{I}(t)\|\ge \varepsilon_{I}.
\end{equation}
By definition of $\varepsilon_{I}$ this holds on $(0,T_{I}]$. Seeking a contradiction suppose that
$$
    \exists\, t_{I,1}\in [T_I,\omega):\ \psi_{I}(t_{I,1})-\|e_{I}(t_{I,1})\|<\varepsilon_{I}.
$$
Set $t_{I,0}=\max\{t\in [T_{I},t_{I,1})\ |\ \psi_{I}(t)-\|e_{I}(t)\|= \varepsilon_{I}\}$. Then, for all $t\in [t_{I,0},t_{I,1}]$, we have
\begin{align*}
  \psi_{I}(t)-\|e_{I}(t)\| &\le \varepsilon_{I},\\
  \|e_{I}(t)\|\ge \psi_{I}(t)-\varepsilon_{I} &\ge \dfrac{\lambda_{I}}{2},\\
  k_{I}(t)=\dfrac{1}{1-\varphi_{I}^2(t)\|e_{I}(t)\|^2}&\ge \dfrac{\lambda_{I}}{2\varepsilon_{I}}.
\end{align*}
Then it follows from~\eqref{eq:DAE.nonl.eI} and~\eqref{eq:DAE.nonl.hatFI} that for all $t\in [t_{I,0},t_{I,1}]$,
\begin{align*}
&\dfrac{1}{2}\ddt\|e_{I}(t)\|^2=\dfrac{1}{2}\left(e_{I}^{\top}(t)\dot{e}_{I}(t) + \dot e_{I}^{\top}(t){e}_{I}(t)\right)\\
&=e_{I}^{\top}(t)\left(\hat{F}_I(t)-\dfrac{1}{2} \Big(\Gamma_I\Big(d_1(t), T_1\big(X_{I},X_{II}\big)(t)\Big) + \Gamma_I\Big(d_1(t), T_1\big(X_{I},X_{II}\big)(t)\Big)^\top\Big) k_I(t)e_I(t) \right)\\
&\le\left(M_{\hat{F}_I}-\dfrac{\lambda_{I}^2}{8\varepsilon_{I}}\gamma\right)\|e_{I}(t)\|\overset{\eqref{eq:DAE.nonl.funn.LI}}{\le}-L_{I}\|e_{I}(t)\|.
\end{align*}
Then, using $\|e_I(t)\|\ge \frac{\lambda_I}{2} > 0$ for all $t\in [t_{I,0},t_{I,1}]$,
\begin{align*}
\|e_{I}(t_{I,1})\|-\|e_{I}(t_{I,0})\|&=\int\limits_{t_{I,0}}^{t_{I,1}}\frac{1}{2}\|e_{I}(t)\|^{-1} \ddts\|e_{I}(t)\|^2\, \ds{t} \\ &\le-L_{I}(t_{I,1}-t_{I,0})\le-|\psi_I(t_{I,1})-\psi_I(t_{I,0})|\le\psi_I(t_{I,1})-\psi_I(t_{I,0}),
\end{align*}
and thus we obtain $\varepsilon_{I}=\psi_{I}(t_{I,0})-\|e_{I}(t_{I,0})\|\le\psi_{I}(t_{I,1})-\|e_{I}(t_{I,1})\|<\varepsilon_{I}$, a contradiction.\\
{\em Step 2d}: We show that $k_{II}(\cdot)$ as in~\eqref{DAE.nonl.funn.controller} is bounded. Seeking a contradiction, we assume that $k_{II}(t)\to \infty$ for $t\to \omega$. Set, for $t\in[0,\omega)$,
\begin{equation}\label{eq:DAE.nonl.FII}
\begin{aligned}
\check{F}_{II}(t)&:=f_2\big(X_I(t),X_{II}(t)\big)+f_3\big(d_3(t),(T_2y)(t)\big) - \Gamma_{II}\big(d_4(t),(T_2y)(t)\big)\, k_I(t) e_I(t).
\end{aligned}\end{equation}
Since~$k_I$ is bounded on $[0\omega)$ by Step~2c, it follows from {Step 2b}, boundedness of $T_2(y)$, $d_3$ and $d_4$ and continuity of $f_2$, $f_3$ and $\Gamma_{II}$ that $\check{F}_{II}(\cdot)$ is bounded on $[0,\omega)$. By~\eqref{syst:DAE.nonl.functional.closed.loop}
we have
\begin{equation}\label{eq:DAE.nonl.eII}
0=\check{F}_{II}(t)-f_4\big(d_5(t),(T_2y)(t)\big)\,k_{II}(t)e_{II}(t).
\end{equation}
We show that $e_{II}(t)\to 0$ for $t\to \omega$. Seeking a contradiction, assume that there exist $\kappa>0$ and a sequence $(t_n)\subset \R_{\ge 0}$ with $t_n\nearrow \omega$ such that $\|e_{II}(t_n)\|\ge \kappa$ for all $n\in \N$. Then, from~\eqref{eq:DAE.nonl.eII} we obtain, for all $t\in[0,\omega)$,
$$
\|\check{F}_{II}(t)\|=\|f_4\big(d_5(t),(T_2y)(t)\big)\,k_{II}(t)e_{II}(t)\| = |f_4\big(d_5(t),(T_2y)(t)\big)| \cdot |k_{II}(t)| \cdot \|e_{II}(t)\|.
$$
Since $k_{II}(t)\to \infty$ for $t\to \omega$, $\|e_{II}(t_n)\|\ge \kappa$ and $f_4\big(d_5(t_n),(T_2y)(t_n)\big)\ge \alpha$, we find that
$$
\|\check{F}_{II}(t_n)\|\ge \alpha\,\kappa\,k_{II}(t_n)\to \infty\ \mbox{ for } n\to \infty,
$$
which contradicts boundedness of $\check{F}_{II}(\cdot)$.

Hence, we have $e_{II}(t)\to 0$ for $t\to \omega$, by which $\lim_{t\to \infty}\varphi_{II}(t)^2\|e_{II}(t)\|^2=0$ because $\varphi_{II}(\cdot)$ is bounded. This leads to the contradiction $\lim_{t\to \infty}k_{II}(t)=\hat{k}$, thus $k_{II}(\cdot)$ is bounded.\\
{\em Step 3}: We show that $\omega=\infty$. Seeking a~contradiction, we assume that $\omega<\infty$. Then, since $e_I, e_{II}, k_I, k_{II}$ and $e_{ij}, k_{ij}$ are bounded for $i=1,\dots,q$, $j=0,\dots,r_i-2$ by {Step 2}, it follows that the closure of the graph of the function in~\eqref{eq:efun} is a compact subset of $\cD$, which is a contradiction. This finishes the proof of the theorem.
\end{proof}
\section{Simulations}\label{sect:DAE.nonl.sims}

In this section we illustrate the application of the funnel controller~\eqref{DAE.nonl.funn.controller} by considering the following academic example:
\begin{equation}\label{DAE.nonl.sims.exam20}
\begin{aligned}
\ddot{y}_1(t)=& -\sin y_1(t)+y_1(t)\dot{y}_1(t)+y_2(t)^2\\&\quad+\dot y_1(t)^2 T(y_1,y_2)(t)+(y_1(t)^2+y_2(t)^4+1)u_I(t),\\
0=&\,y_1(t)^3+y_1(t)\dot{y}_1(t)^3+ y_2(t)+ T(y_1,y_2)(t)+\\&\quad + T(y_1,y_2)(t) u_I(t)+u_{II}(t),
\end{aligned}
\end{equation}
where $T:C(\R_{\ge 0}\to\R^m)\to C^1(\R_{\ge 0}\to\R)$ is given by
$$
T(y_1,y_2)(t):=e^{-2t}\eta^0+\int\limits_{0}^te^{-2(t-s)}\Big(2y_1(s)-y_2(s)\Big)ds, \quad t\ge 0,
$$
for any fix $\eta^0\in \R$. Similar as we have calculated for the operator~$T_2$ on page~\pageref{pp:operatorT2}, we may calculate that $T\in \T_{2,1,0}^{\rm DAE}$. Define
\[
    T_1(y_1,y_1^d,y_2)(t):= \begin{pmatrix} y_1(t)\\ y_1^d(t)\\ y_2(t) \\ T(y_1,y_2)(t)\end{pmatrix},\quad t\ge 0,
\]
then $T_1 \in \T_{2,3,0}$, and set $T_2 := T$. Furthermore, define the functions
\begin{equation*}
\begin{aligned}
f_1:\R^4\to\R,\ & (\eta_1,\eta_2,\eta_3,\eta_4) \mapsto -\sin \eta_1 + \eta_3 \eta_2 + \eta_3^2 + \eta_2^2 \eta_4,\\
\Gamma_I:\R^4\to\R,\ & (\eta_1,\eta_2,\eta_3,\eta_4) \mapsto \eta_1^2+\eta_3^4+1,\\
f_2:\R^4\to\R,\ & (y_1,y_1^d,y_2) \mapsto y_1^3+ y_1 (y_1^d)^3 + y_2,\\
f_3:\R\to\R,\ & \eta \mapsto \eta,\\
\Gamma_{II}:\R\to\R,\ & \eta \mapsto \eta,\\
f_4:\R\to\R,\ & \eta \mapsto 1.
\end{aligned}
\end{equation*}
Then system~\eqref{DAE.nonl.sims.exam20} is of the form~\eqref{syst:DAE.nonl} with $m=2$, $q=1$ and $r_1=2$. It is straightforward to check that Assumption~\ref{def:DAE.nonl.syst.class} is satisfied. In particular, condition~(iii) is satisfied, because
\[
    f_2'(y_1,y_1^d,y_2) \begin{bmatrix} 0\\ I_{m-q}\end{bmatrix} = \frac{\partial f_2}{\partial y_2}(y_1,y_1^d,y_2) = 1
\]
is bounded. Furthermore, $f_4(\eta) \ge 1 =: \alpha$ for all $\eta\in\R$, and hence we may choose $\hat k = 2$, with which condition~\eqref{DAE.nonl.gain.condition} is satisfied.

For the simulation we choose the reference signal $y_{\rmref}(t) =(\cos 2t, \sin t)^{\top}$, and initial values
\[
    y_1(0) = \dot y_1(0) = y_2(0) = 0\quad \text{and}\quad \eta^0 = 0.
\]
For the controller~\eqref{DAE.nonl.funn.controller} we choose the funnel functions $\varphi_{10} = \varphi_{I} = \varphi_{II} = \varphi$ with
\begin{equation*}
\varphi:\R_{\ge 0}\to \R_{\ge 0},\ t\mapsto \tfrac{1}{2}te^{-t}+2\arctan t.
\end{equation*}
It is straightforward to check that $\dot \varphi$ and $\ddot \varphi$ are bounded, thus $\varphi\in\Phi_2$. Moreover, since $\varphi(0)=0$, no restriction is put on the initial error and we find that~\eqref{DAE.nonl.init.condition} is satisfied and $k_{10}(0) = k_I(0) = 1$ and $k_{II}(0) = 2$. Furthermore,
\begin{align*}
    e_I(0) &= e_{1,1}(0) = \dot e_{10}(0) + k_{10}(0) e_{10}(0) = \dot y_1(0) - \dot y_{\rmref,1}(0) + k_{10}(0) \big( y_1(0) - y_{\rmref,1}(0)\big) = -1,\\
     e_{II}(0) &= y_2(0) - y_{\rmref,2}(0) = 0,
\end{align*}
and hence we obtain
\begin{align*}
    u_I(0) = -k_I(0) e_I(0) = 1,\quad
     u_{II}(0) = -k_{II}(0) e_{II}(0) = 0.
\end{align*}
Since $h=0$, we find that in view of Remark~\ref{Rem:cons-init-val} the localization of~$T_2$ satisfies $T_2(0,0) = 0$. With this finally find that the initial value is indeed consistent, i.e., condition~\eqref{eq:consistent-IV} is satisfied. We have now verified all assumptions of Theorem~\ref{Thm:DAE.nonl.funn.controller}, by which funnel control via~\eqref{DAE.nonl.funn.controller} is feasible for the system~\eqref{DAE.nonl.sims.exam20}

The simulation of the controller~\eqref{DAE.nonl.funn.controller} applied to~\eqref{DAE.nonl.sims.exam20} has been performed in MATLAB (solver: \textsf{ode15s}, rel. tol.: $10^{-14}$, abs. tol.: $10^{-10}$) over the time interval [0,10] and is depicted in Figure~\ref{fig:DAE.nonl.sims.exam20}.

\captionsetup[subfloat]{labelformat=empty}
\begin{figure}[h!tb]
  \centering
  \subfloat[Fig. \ref{fig:DAE.nonl.sims.exam20}a: Funnel and tracking errors]
{
\centering
  \includegraphics[width=10cm]{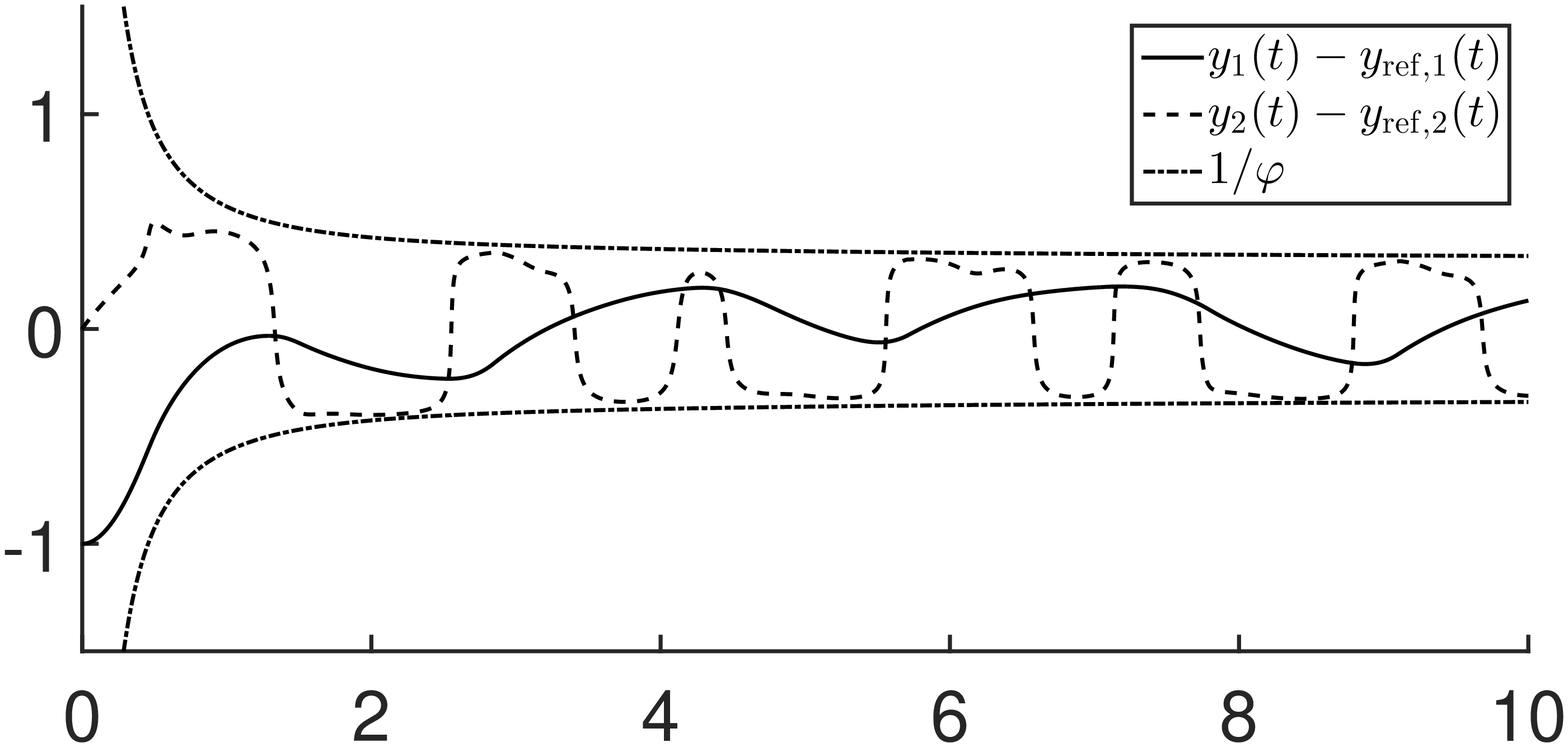}
\label{fig:DAE.nonl.sims.exam20.e}
}\\
\subfloat[Fig. \ref{fig:DAE.nonl.sims.exam20}b: Input functions]
{
\centering
  \includegraphics[width=10cm]{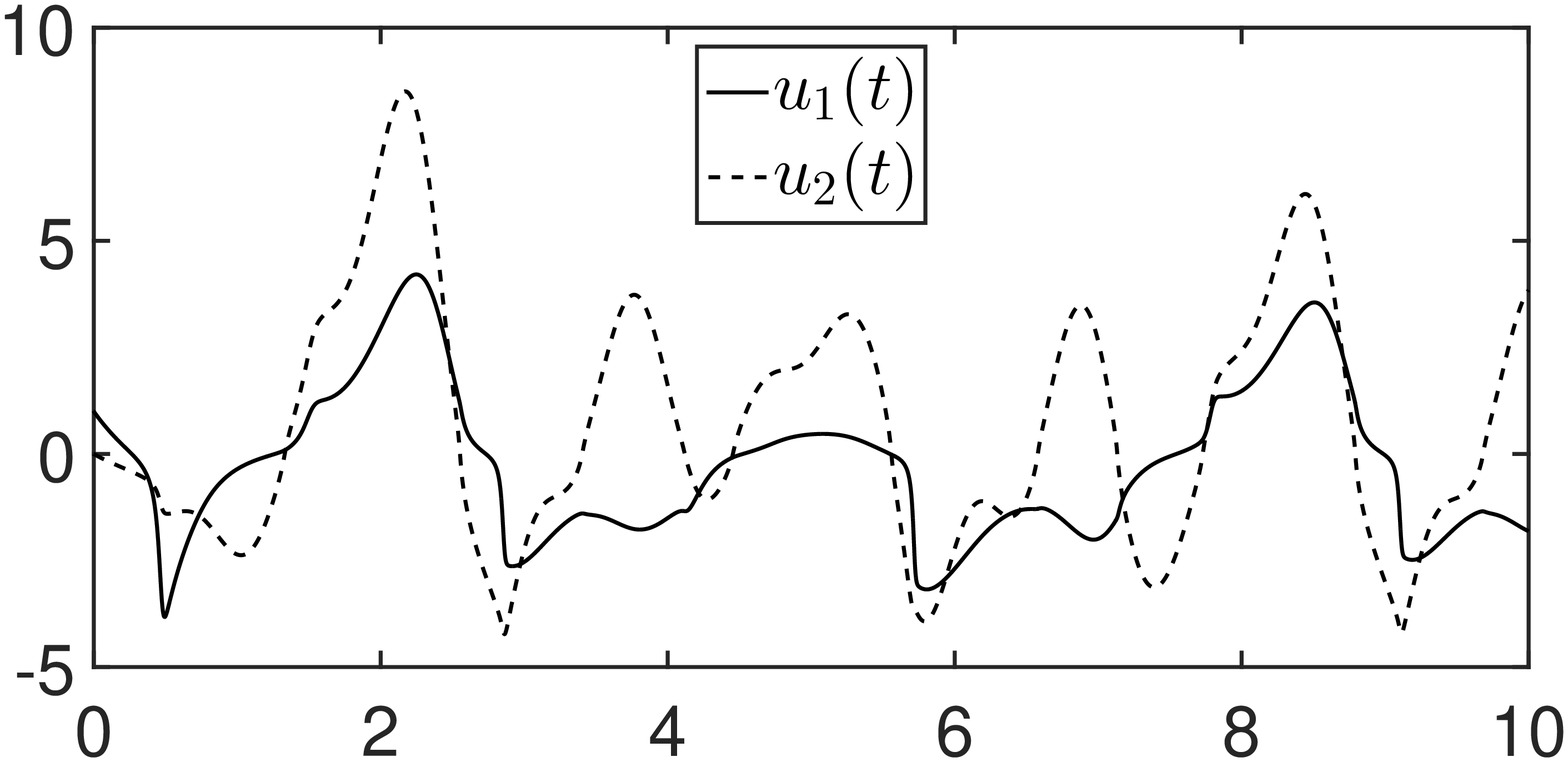}
\label{fig:DAE.nonl.sims.exam20.u}
}
\caption{Simulation of the controller \eqref{DAE.nonl.funn.controller} for the system \eqref{DAE.nonl.sims.exam20}.}
\label{fig:DAE.nonl.sims.exam20}
\end{figure}

Figure \ref{fig:DAE.nonl.sims.exam20.e} shows the tracking error components, which stay uniformly within the funnel  boundaries. The components of the generated input functions are shown in Figure~\ref{fig:DAE.nonl.sims.exam20.u}, which exhibit an acceptable performance.
\bibliographystyle{spmpsci}      


\printindex

\end{document}